\documentclass[12pt,a4 paper]{article}
\usepackage{epsfig}
\usepackage{amsmath}
\usepackage{amsfonts}
\usepackage{amsmath,amscd}
\usepackage{fullpage}
\usepackage{amssymb}
\usepackage{amsthm}
\usepackage{mathrsfs}
\usepackage{graphicx}
\usepackage{xypic}
\setlength{\topmargin}{0.in}
\setlength{\textwidth}{16.5cm}
\setlength{\textheight}{21.20cm}
\setlength{\oddsidemargin}{0.5cm}
\setlength{\evensidemargin}{0.5cm}
\newtheorem{definition}{Definition}[section]
\newtheorem{theorem}[definition]{Theorem}
\newtheorem{remark}[definition]{Remark}
\newtheorem{final Remarks}[definition]{Final Remarks}

\newtheorem{corollary}[definition]{Corollary}

\numberwithin{equation}{section}
\begin{document}
\title{Inertia groups and smooth structures of $(n-1)$-connected $2n$-manifolds}
\vspace{2cm}
\author{Ramesh Kasilingam}
\date{}
\maketitle
\begin{abstract}
Let $M^{2n}$ denote a closed $(n-1)$-connected smoothable topological $2n$-manifold. We show that the group $\mathcal{C}(M^{2n})$ of concordance classes of smoothings of $M^{2n}$ is isomorphic to the group of smooth homotopy spheres $\overline{\Theta}_{2n}$ for $n=4$ or $5$, the concordance inertia group $I_c(M^{2n})=0$ for $n=3$, $4$, $5$ or $11$ and the homotopy inertia group $I_h(M^{2n})=0$ for $n=4$. On the way, following Wall's approach \cite{Wal67} we present a new proof of the main result in \cite{KS07}, namely, for $n=4$, $8$ and $H^{n}(M^{2n};\mathbb{Z})\cong \mathbb{Z}$, the inertia group $I(M^{2n})\cong \mathbb{Z}_2$. We also show that, up to orientation-preserving diffeomorphism, $M^{8}$ has at most two distinct smooth structures; $M^{10}$ has exactly six distinct smooth structures and then show that if $M^{14}$ is a $\pi$-manifold, $M^{14}$ has exactly two distinct smooth structures. 
\end{abstract}
\paragraph{Keywords.}
$(n-1)$-connected $2n$-manifold, smooth structures, the stable tangential invariant, inertia groups, concordance and homotopy inertia groups.
\paragraph{Classification.}
57R55; 57R60; 57R50; 57R65.
\paragraph{Acknowledgments.}
The author would like to thank his advisor, Prof. A. R. Shastri for several helpful suggestions and questions. 
\section{\large Introduction}
\label{intro}
We work in the categories of closed, oriented, simply-connected $Cat$-manifolds $M$ and $N$ and orientation preserving maps, where $Cat=Diff$ for smooth manifolds or $Cat=Top$ for topological manifolds. Let $\overline{\Theta}_m$ be the group of smooth homotopy spheres defined by M. Kervaire and J. Milnor in \cite{KM63}. Recall that the collection of homotopy spheres $\Sigma$  which admit a diffeomorphism $M\to M\#\Sigma$ form a subgroup $I(M)$ of $\overline{\Theta}_m$, called the inertia group of $M$, where we regard the connected sum $M\#\Sigma^m$ as a smooth manifold with the same underlying topological space as $M$ and with smooth structure differing from that of $M$ only on an $n$-disc. The homotopy inertia group $I_h(M)$ of $M^m$ is a subset of the inertia group consisting of homotopy spheres $\Sigma$ for which the identity map $\rm{id}:M\to M\#\Sigma^m$ is homotopic to a diffeomorphism. Similarly, the concordance inertia group of $M^m$, $I_c(M^m)\subseteq \overline{\Theta}_m$, consists of those homotopy spheres $\Sigma^m$ such that $M$ and $M\#\Sigma^m$ are concordant.\\
\indent The paper is organized as following. Let $M^{2n}$ denote a closed $(n-1)$-connected smoothable topological $2n$-manifold. In section 2, we show that the group $\mathcal{C}(M^{2n})$ of concordance classes of smoothings of $M^{2n}$ is isomorphic to the group of smooth homotopy spheres $\overline{\Theta}_{2n}$ for $n=4$ or $5$, the concordance inertia group $I_c(M^{2n})=0$ for $n=3$, $4$, $5$ or $11$ and the homotopy inertia group $I_h(M^{2n})=0$ for $n=4$.\\
\indent In section 3, we present a new proof of the following result in \cite{KS07}. 
\begin{theorem}\label{iner0}
Let $M^{2n}$ be an $(n-1)$-connected closed smooth manifold of dimension $2n\neq4$ such that  $H^{n}(M;\mathbb{Z})\cong \mathbb{Z}$. Then the inertia group  $I(M^{2n})\cong \mathbb{Z}_2$.
\end{theorem}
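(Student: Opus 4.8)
The plan is to reduce the theorem, in the spirit of Wall's handle-theoretic treatment of $(n-1)$-connected manifolds, to a single geometric fact about the disc bundle underlying $M$. First I would pin down the dimension: since $M$ is $(n-1)$-connected and closed with $H^{n}(M;\mathbb{Z})\cong\mathbb{Z}$, Poincar\'e duality forces the middle cup-product pairing on $H^{n}(M;\mathbb{Z})$ to be unimodular of rank one; a skew form of rank one cannot be unimodular, so $n$ is even, and a minimal cell structure presents $M\simeq S^{n}\cup_{f}e^{2n}$ with $f$ of Hopf invariant $\pm1$. By Adams' theorem $n\in\{2,4,8\}$, so $n=4$ or $n=8$ once $2n\neq4$ is imposed. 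In both cases the Kervaire--Milnor computation gives $\overline{\Theta}_{2n}\cong\mathbb{Z}_{2}$ with $bP_{2n+1}=0$, and since $\mathbb{Z}_{2}$ has no proper non-trivial subgroup it suffices to prove $I(M)\neq0$.

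Next I would put $M$ in normal form. Handle theory for the $(n-1)$-connected manifold $M$ with $H_{n}(M)\cong\mathbb{Z}$ yields a handle decomposition $M=D^{2n}\cup h^{n}\cup h^{2n}$, so $W:=M\setminus\operatorname{int}D^{2n}=D^{2n}\cup h^{n}$ is the total space $D(\xi)$ of a linear $D^{n}$-bundle $\xi$ over $S^{n}$ with $\partial W=S(\xi)\cong S^{2n-1}$, forcing the Euler number of $\xi$ to be $\pm1$ (it is the self-intersection of the generating core sphere $S^{n}\subset M$). Thus $M=D(\xi)\cup_{\varphi}D^{2n}$ for a diffeomorphism $\varphi\colon S^{2n-1}\to S(\xi)$, and if $\Sigma^{2n}=D^{2n}\cup_{\psi}D^{2n}$ with $[\psi]\leftrightarrow[\Sigma]$ under Cerf's isomorphism $\pi_{0}\operatorname{Diff}^{+}(S^{2n-1})\cong\overline{\Theta}_{2n}$, then $M\#\Sigma\cong D(\xi)\cup_{\varphi\psi}D^{2n}$. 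Carrying a disc in $M\#\Sigma$ onto $D^{2n}$ by an ambient isotopy then identifies
\[
I(M)=\operatorname{Image}\Bigl(\pi_{0}\operatorname{Diff}(D(\xi))\xrightarrow{\ \text{restriction to }\partial\ }\pi_{0}\operatorname{Diff}(S^{2n-1})\cong\overline{\Theta}_{2n}\Bigr),
\]
so the theorem reduces to showing this restriction map is non-zero, i.e.\ that the generator of $\overline{\Theta}_{2n}$ extends over $D(\xi)$.

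Finally I would establish this extension, which is the real content. Equivalently one must build a diffeomorphism $M\#\Sigma\cong M$, i.e.\ absorb the exotic $2n$-sphere into a tubular neighbourhood of the generating $n$-sphere. Here the Hopf-invariant-one structure is essential: for $M=\mathbb{HP}^{2}$ (resp.\ $\mathbb{OP}^{2}$) the bundle $\xi$ is the quaternionic (resp.\ Cayley) Hopf bundle, whose disc bundle carries many fibrewise self-diffeomorphisms, and for a general such $M$ one carries out the ``variation'' bookkeeping of the clutching function in $\pi_{n-1}(SO(n))$ and $\pi_{2n-1}(S^{n})$, using $bP_{2n+1}=0$ to kill the only possible parallelizable obstruction to the extension. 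An alternative, closer to \cite{Wal67}, is to observe that $M$ and $M\#\Sigma$ have the same Wall invariants $(H_{n},\lambda,\alpha)$, so that Wall's classification of closed $(n-1)$-connected $2n$-manifolds presents the diffeomorphism classes with these data as a quotient $\overline{\Theta}_{2n}/\Lambda(M)$, and one computes $\Lambda(M)=\overline{\Theta}_{2n}$ in dimensions $8$ and $16$; this is again the same variation computation, and either route is uniform in $\xi$ and so handles the ``fake'' manifolds permitted by the hypotheses. I expect this last step --- the explicit absorption of the exotic sphere, equivalently the computation of $\Lambda(M)$ --- to be the main obstacle; the preceding reductions (the dimension count, the handle normal form, Cerf's identification, and the disc-complement description of $I(M)$) are formal, and this is the only place where the special dimensions $n=4,8$ and the vanishing of $bP_{2n+1}$ enter in an essential way.
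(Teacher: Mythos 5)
Your reductions --- pinning $n\in\{4,8\}$ via Poincar\'e duality and Adams, the normal form $M=D(\xi)\cup_\varphi D^{2n}$, and identifying $I(M)$ with the image of $\pi_0\operatorname{Diff}(D(\xi))\to\pi_0\operatorname{Diff}(S^{2n-1})\cong\overline{\Theta}_{2n}$ --- are correct, and your second suggested route (Wall's diffeomorphism classification) is indeed the one the paper follows. But you explicitly defer the computation that is the whole point, and the two hints you leave for it are not adequate. The mention of $bP_{2n+1}=0$ is a red herring: $bP_{2n+1}$ always vanishes for $2n$ even, and it is not the relevant subgroup. Wall's criterion (Theorem~\ref{wall}) identifies $I(M)$ with $\Theta_{2n}(n-1)$, the homotopy spheres bounding $(n-1)$-connected $(2n+1)$-manifolds, and only does so when the stable tangential invariant $\chi$ is odd --- when $\chi$ is even it gives $I(M)=0$, the opposite conclusion. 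The ``fibrewise self-diffeomorphisms of the Hopf disc bundle'' idea has no visible mechanism for producing the exotic boundary diffeomorphism and is in any case not uniform across the non-standard $M$ allowed by the hypotheses.

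The paper supplies exactly the two inputs your sketch is missing. First, $\chi\not\equiv 0\pmod 2$: Wall's Pontrjagin-class formula $p_m=\pm a_m(2m-1)!\,\chi$ with $n=4m$, combined with Tamura's integrality result $p_1(M)=2(2h+1)\xi$ for $n=4$ (and the analogous $p_2(M)=6(2k+1)\xi$ for $n=8$), forces $\chi=\pm(2h+1)\xi$, an odd multiple of the generator $\xi$. Without this one cannot even invoke the nontrivial clause of Wall's theorem. Second, $\Theta_{2n}(n-1)=\overline{\Theta}_{2n}$: for $n=4$ this is Anderson--Brown--Peterson ($\overline{\Theta}_8\to\Omega_8^{Spin}$ is zero); for $n=8$ it holds because $\Omega_{16}(7)\cong\Omega_{16}^{String}\cong\mathbb{Z}\oplus\mathbb{Z}$ is torsion-free while $\overline{\Theta}_{16}\cong\mathbb{Z}_2$. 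Neither of these bordism-theoretic facts, nor the Pontrjagin-class argument, appears in your proposal, so what you have is a valid reformulation rather than a proof.
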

\indent In section 4, we show that, up to orientation-preserving diffeomorphism, $M^{8}$ has at most two distinct smooth structures; $M^{10}$ has exactly six distinct smooth structures and if $M^{14}$ is a $\pi$-manifold, then $M^{14}$ has exactly two distinct smooth structures. 
\section{Concordance inertia groups of $(n-1)$-connected $2n$-manifolds}
We recall some terminology from \cite{KM63}:
\begin{definition}\rm
\begin{itemize}
\item[(a)] A homotopy $m$-sphere $\Sigma^m$ is a closed oriented smooth manifold homotopy equivalent to the standard unit sphere $\mathbb{S}^m$ in $\mathbb{R}^{m+1}$.
\item[(b)]A homotopy $m$-sphere $\Sigma^m$ is said to be exotic if it is not diffeomorphic to $\mathbb{S}^m$.
\end{itemize}
\end{definition}
\begin{definition}\rm
Define the $m$-th group of smooth homotopy spheres $\Theta_m$ as follows. Elements are oriented $h$-cobordism classes $[\Sigma]$ of homotopy $m$-spheres $\Sigma$, where $\Sigma$ and $\Sigma^{\prime}$ are called (oriented) $h$-cobordant if there is an oriented $h$-cobordism $(W, \partial_0W,\partial_1W)$ together with orientation preserving diffeomorphisms $\Sigma\to \partial_0W$ and $(\Sigma^{\prime})^-\to \partial_1W$. The addition is given by the connected sum. The zero element is represented by $\mathbb{S}^m$. The inverse of $[\Sigma]$ is given by $[\Sigma^-]$, where $\Sigma^-$ is obtained from $\Sigma$ by reversing the orientation.  M. Kervaire and J. Milnor \cite{KM63} showed that each $\Theta_m$ is a finite abelian group $(m\geq1)$.
\end{definition}
\begin{definition}\rm
Two homotopy $m$-spheres $\Sigma^{m}_{1}$ and $\Sigma^{m}_{2}$ are said to be equivalent if there exists an orientation preserving diffeomorphism $f:\Sigma^{m}_{1}\to \Sigma^{m}_{2}$.\\
\indent The set of equivalence classes of homotopy $m$-spheres is denoted by $\overline{\Theta}_m$. The Kervaire-Milnor \cite{KM63} paper worked rather with the group $\Theta_m$ of smooth homotopy spheres up to $h$-cobordism. This makes a difference only for $m=4$, since it is known, using the $h$-cobordism theorem of Smale \cite{Sma62}, that $\Theta_m\cong \overline{\Theta}_m$ for $m\neq 4$. However the difference is important in the four dimensional case, since $\Theta_4$ is trivial, while the structure of $\overline{\Theta}_4$ is a great unsolved problem.
\end{definition}
\begin{definition}\rm
Let $M$ be a closed topological manifold. Let $(N,f)$ be a pair consisting of a smooth manifold $N$ together with a homeomorphism $f:N\to M$. Two such pairs $(N_{1},f_{1})$ and $(N_{2},f_{2})$ are concordant provided there exists a diffeomorphism $g:N_{1}\to N_{2}$ such that the composition $f_{2}\circ g$ is topologically concordant to $f_{1}$, i.e., there exists a homeomorphism $F: N_{1}\times [0,1]\to M\times [0,1]$ such that $F_{|N_{1}\times 0}=f_{1}$ and $F_{|N_{1}\times 1}=f_{2}\circ g$. The set of all such concordance classes is denoted by $\mathcal{C}(M)$.\\
\indent We will denote the class in $\mathcal{C}(M)$ of $(M^n\#\Sigma^n, \rm{id})$ by $[M^n\#\Sigma^n]$. (Note that $[M^n\#\mathbb{S}^n]$ is the class of $(M^n, \rm{id})$.)
\end{definition}
\begin{definition}\rm{
Let $M^m$ be a closed smooth $m$-dimensional manifold. The inertia group $I(M)\subset \overline{\Theta}_{m}$ is defined as the set of $\Sigma \in \overline{\Theta}_{m}$ for which there exists a diffeomorphism $\phi :M\to M\#\Sigma$.\\
\indent Define the homotopy inertia group $I_h(M)$ to be the set of all $\Sigma\in I(M)$ such that there exists a diffeomorphism $M\to M\#\Sigma$ which is homotopic to $\rm{id}:M \to M\#\Sigma$.\\
\indent Define the concordance inertia group $I_c(M)$ to be the set of all $\Sigma\in I_h(M)$ such that $M\#\Sigma$ is concordant to $M$.}
\end{definition}
\begin{remark}\rm
\indent
\begin{itemize}
\item[(1)] Clearly, $I_c(M)\subseteq I_h(M)\subseteq I(M)$.
\item [(2)] For $M=\mathbb{S}^m$, $I_c(M)=I_h(M)=I(M)=0$.
\end{itemize}
\end{remark}
Now we have the following:
\begin{theorem}\label{conwall}
Let $M^{2n}$ be a closed smooth $(n-1)$-connected ${2n}$-manifold with $n\geq 3$. 
\begin{itemize}
 \item [{\rm(i)}]If $n$ is any integer such that $\Theta_{n+1}$ is trivial, then $I_c(M^{2n})=0$.
 \item [{\rm(ii)}]If $n$ is any integer greater than $3$ such that $\Theta_{n}$ and $\Theta_{n+1}$ are trivial, then $$\mathcal{C}(M^{2n})=\left \{ [M^{2n}\#\Sigma]~~ |~~\Sigma\in \overline{\Theta}_{2n} \right \}\cong \overline{\Theta}_{2n}.$$
 \item[{\rm(iii)}] If $n=8$ and $H^n(M;\mathbb{Z})\cong \mathbb{Z}$, then $M^{2n}\#\Sigma^{2n}$ is not concordant to $M^{2n}$, where $\Sigma^{2n}\in \overline{\Theta}_{2n}$ is the exotic sphere. In particular, $\mathcal{C}(M^{2n})$ has at least two elements.
  \item[{\rm (iv)}]If $n$ is any even integer such that $\Theta_{n}$ and $\Theta_{n+1}$ are trivial, then $I_h(M)=0$.
\end{itemize}
\end{theorem}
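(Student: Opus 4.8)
The plan is to run the whole analysis through the smoothing-theory identification $\mathcal{C}(M^{2n})\cong[M^{2n},\mathrm{Top}/O]$, valid because $2n\geq 6$, together with $\pi_i(\mathrm{Top}/O)\cong\Theta_i$ for $i\geq 5$, $\pi_4(\mathrm{Top}/O)=0$, $\pi_3(\mathrm{Top}/O)\cong\mathbb{Z}_2$, and $\Theta_m\cong\overline{\Theta}_m$ for $m\neq 4$. First I would fix the homotopy type of $M$: since $M$ is $(n-1)$-connected, Poincar\'e duality and the universal coefficient theorem give that $H_n(M;\mathbb{Z})\cong\mathbb{Z}^k$ is free, so $M\simeq\bigl(\bigvee_k S^n\bigr)\cup_{\phi}e^{2n}$ for an attaching map $\phi\colon S^{2n-1}\to\bigvee_k S^n$. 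Applying $[-,\mathrm{Top}/O]$ to the Puppe sequence of the cofibration $\bigvee_k S^n\xrightarrow{\,j\,}M\xrightarrow{\,c\,}S^{2n}\xrightarrow{\,\Sigma\phi\,}\bigvee_k S^{n+1}\to\cdots$ produces the exact sequence of abelian groups
\[
\prod_k\pi_{n+1}(\mathrm{Top}/O)\xrightarrow{(\Sigma\phi)^*}\pi_{2n}(\mathrm{Top}/O)\xrightarrow{\,c^*\,}[M,\mathrm{Top}/O]\xrightarrow{\,j^*\,}\prod_k\pi_n(\mathrm{Top}/O),
\]
where $c^*\colon\overline{\Theta}_{2n}=\mathcal{C}(S^{2n})\to\mathcal{C}(M)$ is exactly $\Sigma\mapsto[M\#\Sigma]$, since connected sum with a homotopy sphere alters the smoothing only over the top cell. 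Consequently $I_c(M)=\ker(c^*)=\operatorname{im}(\Sigma\phi)^*$.

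Parts (i) and (ii) then fall out directly. If $\Theta_{n+1}$ is trivial, then $\pi_{n+1}(\mathrm{Top}/O)=0$ (this reads as $\pi_4(\mathrm{Top}/O)=0$ for $n=3$ and as $\Theta_{n+1}=0$ for $n\geq4$), so $(\Sigma\phi)^*=0$ and $I_c(M)=0$; that is (i). If moreover $\Theta_n$ is trivial and $n\geq4$, then $\pi_n(\mathrm{Top}/O)=0$, so $j^*=0$, $c^*$ is onto, and by (i) it is an isomorphism $\overline{\Theta}_{2n}\xrightarrow{\ \cong\ }[M,\mathrm{Top}/O]=\mathcal{C}(M)$; unravelling this is exactly (ii).

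For (iv) I would couple the above with the smooth surgery exact sequence of $M$. One checks that $\Sigma\in I_h(M)$ if and only if $(M\#\Sigma,\mathrm{id})=(M,\mathrm{id})$ in $\mathcal{S}^{\mathrm{Diff}}(M)$, so in that case the two normal invariants in $[M,G/O]$ agree; the normal invariant of $(M\#\Sigma,\mathrm{id})$ is $c^*$ of the image of $\Sigma$ under $\overline{\Theta}_{2n}\hookrightarrow\pi_{2n}(G/O)$, the latter injective because $L_{2n+1}(\mathbb{Z})=0$. Rerunning the cofibre sequence above with $\mathrm{Top}/O$ replaced by $G/O$, the relevant obstruction term is $\prod_k\pi_{n+1}(G/O)$, and the fibration $\mathrm{Top}/O\to G/O\to G/\mathrm{Top}$ gives $\pi_{n+1}(\mathrm{Top}/O)\to\pi_{n+1}(G/O)\to\pi_{n+1}(G/\mathrm{Top})=L_{n+1}(\mathbb{Z})$; since $n$ is even, $n+1$ is odd, so $L_{n+1}(\mathbb{Z})=0$, and together with $\pi_{n+1}(\mathrm{Top}/O)=\Theta_{n+1}=0$ this forces $\pi_{n+1}(G/O)=0$. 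Hence $c^*\colon\pi_{2n}(G/O)\hookrightarrow[M,G/O]$ is injective, the composite $\overline{\Theta}_{2n}\to[M,G/O]$ is injective, and vanishing of the normal invariant forces $\Sigma$ standard; thus $I_h(M)=0$. The parity of $n$ is precisely what makes this work: for odd $n$ the group $\pi_{n+1}(G/O)$ can be nonzero even when $\Theta_{n+1}=0$.

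Part (iii) carries the essential computation. Here $n=8$, $k=1$, $M^{16}=S^8\cup_{\phi}e^{16}$, and $\overline{\Theta}_{16}\cong\Theta_{16}\cong\mathbb{Z}_2$, so by the exact sequence $M\#\Sigma^{16}$ is concordant to $M$ precisely when $(\Sigma\phi)^*\colon\Theta_9=\pi_9(\mathrm{Top}/O)\to\pi_{16}(\mathrm{Top}/O)=\Theta_{16}$ is nonzero, i.e.\ when composition with the stable class $[\phi]\in\pi_7^s\cong\mathbb{Z}_{240}$ carries $\Theta_9$ onto $\Theta_{16}$. Because the cup-square of the generator of $H^8(M)$ generates $H^{16}(M)$, the operation $\mathrm{Sq}^8$ acts nontrivially on $M$, which forces $[\phi]$ to be, $2$-locally, a generator of $\pi_7^s$; so composition with it on the $2$-groups $\Theta_9$ and $\Theta_{16}$ has the same image as composition by $\sigma$. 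The main obstacle is then to prove that the composition product $\sigma\cdot\Theta_9\to\Theta_{16}$ vanishes; I would settle this by writing the generators of $\Theta_9$ (sitting in an extension of $bP_{10}$ by $(\operatorname{coker}J)_9$) and of $\Theta_{16}\subseteq(\operatorname{coker}J)_{16}$ as explicit elements of the stable stems and invoking the known triviality of the corresponding composition (Toda) products. Granting this, $\operatorname{im}(\Sigma\phi)^*=0$, so $[M\#\Sigma^{16}]\neq[M]$ in $\mathcal{C}(M)$; in particular $\mathcal{C}(M)$ has at least two elements.
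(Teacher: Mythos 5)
Your treatment of parts (i), (ii) and (iv) is correct and follows essentially the same route as the paper: applying $[-,Cat/O]$ to the Puppe sequence of Wall's cell decomposition $M\simeq(\bigvee_k S^n)\cup_\phi e^{2n}$ for $Cat=\mathrm{Top}$ (to identify $\mathcal{C}(M)$) and for $Cat=G$ (to feed into the surgery exact sequence of $M$ and of $S^{2n}$), and using $\pi_{n+1}(\mathrm{Top}/O)=0$, $\pi_n(\mathrm{Top}/O)=0$, $\pi_{n+1}(G/O)=0$ as the vanishing inputs. Your explanation of why $\pi_{n+1}(G/O)=0$ for $n$ even (via $\mathrm{Top}/O\to G/O\to G/\mathrm{Top}$ and $L_{\mathrm{odd}}=0$) makes explicit a step the paper only asserts, which is a nice addition.

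Part (iii), however, contains a genuine gap. After the correct reduction to showing that $(\Sigma\phi)^*\colon\pi_9(\mathrm{Top}/O)\to\pi_{16}(\mathrm{Top}/O)$ vanishes, and the further reduction (via the mod-$2$ Hopf invariant and a $2$-local argument) to the vanishing of the composition action of $\sigma\in\pi_7^s$ on $\Theta_9$, you write that you ``would settle this'' by writing down generators in the stable stems and ``invoking the known triviality of the corresponding composition (Toda) products,'' and then continue ``Granting this\dots.'' That last computation is exactly the content of part (iii) and cannot be granted; without it nothing has been proved. The vanishing of $\sigma\cdot\Theta_9$ in $\Theta_{16}\cong\mathbb{Z}_2$ is not a formality: $\Theta_9$ has both a $bP_{10}$ piece and a $\mathrm{coker}\,J$ piece, and one must actually check that $\sigma$ annihilates the relevant classes. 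The paper circumvents this by citing the explicit calculation carried out in the proof of Lemma~1 of \cite{AF03} for the Hopf map $p\colon S^{15}\to S^8$, and then observing that the \cite{AF03} argument never uses any special feature of $p$ beyond it being a map $S^{2n-1}\to S^n$ --- so it applies to the attaching map of an arbitrary $M$ with $H^8(M;\mathbb{Z})\cong\mathbb{Z}$. In fact the paper's observation makes your Hopf-invariant reduction unnecessary: one does not need to identify the $2$-local stable class of $\phi$ at all. To repair your proof you would either have to carry out the Toda-product computation you allude to, or replace that passage with the citation to \cite{AF03} as the paper does.
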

\begin{proof}
Let $Cat=Top~~ {\rm{or}}~~ G$, where $Top~~ {\rm{and}}~~ G$ are the stable spaces of self homeomorphisms of $\mathbb{R}^{n}$ and self homotopy equivalences  of $\mathbb{S}^{n-1}$ respectively. For any degree one map $f_{M}:M\to \mathbb{S}^{2n}$, we have a homomorphism
$$f_{M}^*:[\mathbb{S}^{2n}, Cat/O]\to [M ,Cat/O].$$ By Wall \cite {Wal62}, $M$ has the homotopy type of $X=(\bigvee_{i=1}^{k} \mathbb{S}_{i}^n)\bigcup_{g}\mathbb{D}^{2n}$, where $k$ is the $n$-th Betti number of $M$, $\bigvee_{i=1}^{k} \mathbb{S}_{i}^n$ is the wedge sum of $n$-spheres and $g:\mathbb{S}^{2n-1}\to \bigvee_{i=1}^{k} \mathbb{S}_{i}^n$ is the attaching map of $\mathbb{D}^{2n}$. Let $\phi:M\to X$ be a homotopy equivalence of degree one and $q:X\to \mathbb{S}^{2n}$ be the collapsing map obtained by identifying $\mathbb{S}^{2n}$ with $X/\bigvee_{i=1}^{k} \mathbb{S}_{i}^n$ in an orientation preserving way. Let $f_{M}=q\circ \phi:M\to \mathbb{S}^{2n}$ be the degree one map.\\
\indent Consider the following Puppe's exact sequence for the inclusion $i:\bigvee_{i=1}^{k} \mathbb{S}_{i}^n \hookrightarrow X$ along $Cat/O$:
\begin{equation}\label{longG}
....\longrightarrow [\bigvee_{i=1}^{k}S \mathbb{S}_{i}^n, Cat/O]\stackrel{(S(g))^{*}}{\longrightarrow}[\mathbb{S}^{2n}, Cat/O]\stackrel{q^{*}}{\longrightarrow}[X, Cat/O]\stackrel{i^{*}}{\longrightarrow}[\bigvee_{i=1}^{k} \mathbb{S}_{i}^n, Cat/O],
\end{equation}
where $S(g)$ is the suspension of the map $g:\mathbb{S}^{2n-1}\to \bigvee_{i=1}^{k} \mathbb{S}_{i}^n$.\\
\indent Using the fact that $$[\bigvee_{i=1}^{k}S \mathbb{S}_{i}^n, Cat/O]\cong \prod_{i=1}^{k}[\mathbb{S}_{i}^{n+1}, Cat/O]$$ and $$[\bigvee_{i=1}^{k} \mathbb{S}_{i}^n, Cat/O]\cong \prod_{i=1}^{k}[\mathbb{S}_{i}^n, Cat/O],$$ the above exact sequence (\ref{longG}) becomes $$....\longrightarrow \prod_{i=1}^{k}[\mathbb{S}_{i}^{n+1}, Cat/O] \stackrel{(S(g))^{*}}{\longrightarrow}[\mathbb{S}^{2n}, Cat/O] \stackrel{q^{*}}{\longrightarrow}[X, Cat/O]\stackrel{i^{*}}{\longrightarrow} \prod_{i=1}^{k}[\mathbb{S}_{i}^n, Cat/O].$$\\
(i):
If $n$ is any integer such that $\Theta_{n+1}$ is trivial and $Cat=Top$ in the above exact sequence (\ref{longG}), by using the fact that $$[\mathbb{S}^{m}, Top/O]=\overline{\Theta}_{m} ~(m\neq 3, 4)$$ and $[\mathbb{S}^{4}, Top/O]=0$ (\cite[pp. 200-201]{KS77}), we have $q^{*}:[\mathbb{S}^{2n}, Top/O]\to [X, Top/O]$ is injective. Hence $f_{M}^*=\phi^{*}\circ q^{*}:\overline{\Theta}_{2n}\to [M, Top/O]$ is injective. By using the identifications $\mathcal{C}(M^{2n})=[M,Top/O]$ given by \cite[pp. 194-196]{KS77},  $f_{M}^*:\overline{\Theta}_{2n}\to \mathcal{C}(M^{2n})$ becomes $[\Sigma^{2n}]\to [M\#\Sigma^{2n}]$. $I_c(M)$ is exactly the kernel of $f_{M}^{*}$, and so $I_c(M)=0$. This proves (i).\\
(ii): If $n>3$, $\Theta_{n}$ and $\Theta_{n+1}$ are trivial, and $Cat=Top$ then, from the above exact sequence (\ref{longG}) we have $q^{*}:[\mathbb{S}^{2n}, Top/O]\to [X, Top/O]$ is an isomorphism. This shows that $f_{M}^*=\phi^{*}\circ q^{*}:\overline{\Theta}_{2n}\to \mathcal{C}(M^{2n})$ is an isomorphism and hence $$\mathcal{C}(M^{2n})=\{ [M^{2n}\#\Sigma] ~~|~~ \Sigma\in \overline{\Theta}_{2n} \}.$$ This proves (ii).\\
(iii): If $n=8$ and $H^n(M;\mathbb{Z})\cong \mathbb{Z}$, then $M^{2n}$ has the homotopy type of $X=\mathbb{S}^{n}\bigcup_{g}\mathbb{D}^{2n}$, where $g:\mathbb{S}^{2n-1}\to \mathbb{S}^{n}$ is the attaching map. In order to prove $M^{2n}\#\Sigma^{2n}$ is not concordant to $M^{2n}$, by the above exact sequence (\ref{longG}) for $Cat=Top$, it suffices to prove $q^{*}:[\mathbb{S}^{16}, Top/O]\to [X, Top/O]$ is monic, which is equivalent to saying that $(S(g))^{*}:[S \mathbb{S}^8, Top/O]\to [\mathbb{S}^{16}, Top/O]$ is the zero homomorphism. For the case $g=p$, where $p:\mathbb{S}^{15}\to \mathbb{S}^8$ is the Hopf map, $(S(g))^{*}$ is the zero homomorphism, which was proved in the course of proof of lemma 1 in \cite[pp. 58-59]{AF03}. This proof works verbatim for any map $g:\mathbb{S}^{2n-1}\to \mathbb{S}^{n}$ as well. This proves (iii).\\
(iv): If $n$ is any even integer such that  $\Theta_{n}$ and $\Theta_{n+1}$ are trivial, then $\pi_{n+1}(G/O)=0$. This shows that from the above exact sequence (\ref{longG}) for $Cat=G$,  $q^{*}:[\mathbb{S}^{2n}, G/O]\to [X ,G/O]$ is injective. Then $f_{M}^*=\phi^{*}\circ q^{*}:[\mathbb{S}^{2n}, G/O]\to [M,G/O]$ is injective. From the surgery exact sequences of $M$ and $\mathbb{S}^{2n}$, we get the following commutative diagram (\cite[Lemma 3.4]{Cro10}):
\begin{equation}\label{digram1}
\begin{CD}
L_{2n+1}(e)@>>> \overline{\Theta}_{2n} @>\eta_{\mathbb{S}^{2n}}>> \pi_{2n}(G/O)@>>>  L_{2n}(e)\\
 @VV=V            @VVf_{M}^{\bullet}V             @VVf_{M}^*V                         @VV=V\\
L_{2n+1}(e)@>>> \mathcal{S}^{Diff}(M)  @>\eta_{M}>>    [M, G/O]     @>>>   L_{2n}(e)
\end{CD}
\end{equation}
By using the facts that $L_{2n+1}(e)=0$,  injectivity of $\eta_{\mathbb{S}^{2n}}$ and $\eta_{M}$ follow from the diagram, and combine with the injectivity of $f_{M}^{*}$ to show that $f_{M}^{\bullet}: \overline{\Theta}_{2n}\to \mathcal{S}^{Diff}(M)$ is injective. $I_h(M)$ is exactly the kernel of $f_{M}^{\bullet}$, and so $I_h(M)=0$. This proves (iv).
\end{proof}
\begin{remark}\label{homoiner}\rm
\indent
\begin{itemize}
\item[(i)] By M. Kervaire and J. Milnor \cite{KM63}, $\Theta_m=0$ for $m=1$, $2$, $3$, $4$, $5$, $6$ or $12$.
If $M^{2n}$ is a closed smooth $(n-1)$-connected $2n$-manifold, by Theorem \ref{conwall}(i) and (ii), $I_c(M^{2n})=0$ for $n=3$, $4$, $5$ or $11$ and $\mathcal{C}(M^{2n})\cong \overline{\Theta}_{2n}$ for $n=4$ or 5.
\item[(ii)] If $M$ has the homotopy type of $\mathbb{O}\textbf{P}^2$, by Theorem \ref{iner0} and Theorem \ref{conwall}(iii), we have $I_c(M)=0\neq I(M)$.
\item[(iii)] By Theorem \ref{conwall}(iv), if $M$ has the homotopy type of $\mathbb{H}\textbf{P}^2$, then $I_h(M)=0$.
\end{itemize}
\end{remark}
\begin{definition}\rm
Let $M$ and $N$ are smooth manifolds. A smooth map $f: M\to N$ is called tangential if for some integers $k$, $l$, $f^{*}(T(N))\oplus \displaystyle{\epsilon}_{M}^{k}\cong T(M)\oplus \displaystyle{\epsilon}_{M}^{l}$.
\end{definition}
\begin{definition}\rm
Let $M$ be a topological manifold. Let $(N,f)$ be a pair consisting of a smooth manifold $N$ together with a tangential homotopy equivalence of degree one $f:N\to M$. Two such pairs $(N_{1},f_{1})$ and $(N_{2},f_{2})$ are equivalent provided there exists a diffeomorphism $g:N_{1}\to N_{2}$ such that $f_{2}\circ g$ is homotopic to $f_{1}$. The set of all such equivalence classes is denoted by $\theta(M)$.
\end{definition}
For $M=\mathbb{H}\textbf{P}^2$, \cite[Theorem 4]{Her69} shows $\theta(\mathbb{H}\textbf{P}^2)$ contains at most two elements. Now by Remark \ref{homoiner}(iii), we have the following:
\begin{corollary}
{\rm $\theta(\mathbb{H}\textbf{P}^2)$} contains exactly two elements, with representatives given by {\rm $(\mathbb{H}\textbf{P}^2, \rm{id})$} and {\rm $(\mathbb{H}\textbf{P}^2\#\Sigma^{8}, \rm{id})$}, where $\Sigma^{8}$ is the exotic $8$-sphere.
\end{corollary}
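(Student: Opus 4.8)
The plan is to combine the upper bound $|\theta(\mathbb{H}\textbf{P}^2)|\le 2$ coming from \cite[Theorem 4]{Her69} with the vanishing $I_h(\mathbb{H}\textbf{P}^2)=0$ recorded in Remark \ref{homoiner}(iii): once these are in hand it suffices to produce two pairs that are \emph{not} equivalent in $\theta(\mathbb{H}\textbf{P}^2)$, and the natural candidates are $(\mathbb{H}\textbf{P}^2,\mathrm{id})$ and $(\mathbb{H}\textbf{P}^2\#\Sigma^{8},\mathrm{id})$, where $\Sigma^8$ denotes the exotic $8$-sphere. Note that Theorem \ref{iner0} gives $I(\mathbb{H}\textbf{P}^2)\cong\mathbb{Z}_2$, so $\mathbb{H}\textbf{P}^2$ and $\mathbb{H}\textbf{P}^2\#\Sigma^8$ are abstractly diffeomorphic; the content of the corollary is that no such diffeomorphism can be taken homotopic to the identity.

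First I would check that $(\mathbb{H}\textbf{P}^2\#\Sigma^{8},\mathrm{id})$ is a legitimate element of $\theta(\mathbb{H}\textbf{P}^2)$. Here $\mathrm{id}$ is the identity of the common underlying topological space, so it is a homeomorphism $\mathbb{H}\textbf{P}^2\#\Sigma^{8}\to\mathbb{H}\textbf{P}^2$ and hence a homotopy equivalence of degree one. It is tangential because forming the connected sum with a homotopy sphere changes the smooth structure only over an $8$-disc, and $\Sigma^8$ is stably parallelizable (it represents a class in the Kervaire--Milnor group); concretely $\mathrm{id}^{*}(T\mathbb{H}\textbf{P}^2)\oplus\epsilon^{1}\cong T(\mathbb{H}\textbf{P}^2\#\Sigma^{8})\oplus\epsilon^{1}$. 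Thus the pair is admissible.

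Next I would show the two classes are distinct. If they were equivalent, there would be a diffeomorphism $g:\mathbb{H}\textbf{P}^2\to\mathbb{H}\textbf{P}^2\#\Sigma^{8}$ with $\mathrm{id}\circ g\simeq\mathrm{id}$, i.e.\ $g$ homotopic to the topological identity $\mathbb{H}\textbf{P}^2\to\mathbb{H}\textbf{P}^2\#\Sigma^{8}$; by definition of the homotopy inertia group this says exactly $\Sigma^{8}\in I_h(\mathbb{H}\textbf{P}^2)$. But $\mathbb{H}\textbf{P}^2$ is a closed smooth $3$-connected $8$-manifold, so $n=4$ is even with $\Theta_{4}=\Theta_{5}=0$, and therefore $I_h(\mathbb{H}\textbf{P}^2)=0$ by Theorem \ref{conwall}(iv) (Remark \ref{homoiner}(iii)); since $\Sigma^{8}$ is a nonzero element of $\overline{\Theta}_{8}\cong\mathbb{Z}_2$, this is a contradiction. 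Hence $(\mathbb{H}\textbf{P}^2,\mathrm{id})$ and $(\mathbb{H}\textbf{P}^2\#\Sigma^{8},\mathrm{id})$ are distinct, so $|\theta(\mathbb{H}\textbf{P}^2)|\ge 2$, and together with Herrera's bound $|\theta(\mathbb{H}\textbf{P}^2)|=2$ with the stated representatives.

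The only step requiring genuine care is matching the equivalence relation defining $\theta$ with membership in $I_h$: one must verify that the condition $f_2\circ g\simeq f_1$ with $f_1=f_2=\mathrm{id}$ translates precisely into ``there exists a diffeomorphism $\mathbb{H}\textbf{P}^2\to\mathbb{H}\textbf{P}^2\#\Sigma^8$ homotopic to the identity,'' so that $\Sigma^8\in I_h(\mathbb{H}\textbf{P}^2)$ is forced. Everything else — admissibility of the pair, and the input facts $\Theta_4=\Theta_5=0$ and $\overline{\Theta}_8\cong\mathbb{Z}_2$ — is routine.
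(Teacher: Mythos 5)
Your proof is correct and follows essentially the same route as the paper: invoke Herrera's bound $|\theta(\mathbb{H}\textbf{P}^2)|\le 2$ and then use $I_h(\mathbb{H}\textbf{P}^2)=0$ (Remark \ref{homoiner}(iii), via Theorem \ref{conwall}(iv)) to see that $(\mathbb{H}\textbf{P}^2,\mathrm{id})$ and $(\mathbb{H}\textbf{P}^2\#\Sigma^8,\mathrm{id})$ are distinct. You supply more detail than the paper (the admissibility check that $\mathrm{id}$ is a tangential homotopy equivalence, and the explicit unwinding of the equivalence relation in $\theta$ into membership in $I_h$), but the argument is the same.
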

\section{Inertia groups of projective plane-like manifolds}
In \cite{Wal62}, C.T.C. Wall assigned to each closed oriented $(n-1)$-connected $2n$-dimensional smooth manifold $M^{2n}$ with $n\geq 3$, a system of invariants as follows:
 \begin{itemize}
  \item [(1)] $H=H^n(M; \mathbb{Z})\cong {\rm{Hom}}(H_n(M; \mathbb{Z}), \mathbb{Z})\cong \oplus_{j=1}^{k} \mathbb{Z}$, the cohomology group of $M$, with $k$ the $n$-th Betti number of $M$,
  \item [(2)] $I : H\times H\to \mathbb{Z}$, the intersection form of $M$ which is unimodular and $n$-symmetric, defined by
$$I(x,y)=\left \langle x\cup y, [M] \right \rangle,$$ where the homology class $[M]$ is the orientation class of $M$,
\item [(3)] A map $\alpha: H^n(M; \mathbb{Z})\to \pi_{n-1}(SO_n)$ that assigns each element $x\in H^n(M; \mathbb{Z})$ to the characteristic map $\alpha(x)$ for the normal bundle of the embedded $n$-sphere $\mathbb{S}^{n}_x$ representing $x$. 
\end{itemize}
Denote by $\chi = S\circ \alpha: H^n(M; \mathbb{Z})\to \pi_{n-1}(SO_{n+1})\cong \widetilde{KO}(\mathbb{S}^n)$, where
$S:\pi_{n-1}(SO_n)\to \pi_{n-1}(SO_{n+1})$ is the suspension map. Then $$\chi = S\circ ~ \alpha \in H^n(M; \widetilde{KO}(\mathbb{S}^n))= {\rm{Hom}}(H^n(M; \mathbb{Z});\widetilde{KO}(\mathbb{S}^n))$$
can be viewed as an $n$-dimensional cohomology class of $M$, with coefficients in $\widetilde{KO}(\mathbb{S}^n)$. The obstruction to triviality of the tangent bundle over the $n$-skeleton is the element  $\chi\in H^n(M; \widetilde{KO}(\mathbb{S}^n))$ \cite{Wal62}. By \cite[pp. 179-180]{Wal62}, the Pontrjagin class of $M^{2n}$ is given by 
\begin{equation}\label{pontr1}
\begin{split}
p_m(M^{2n})&=\pm a_{m}(2m-1)! \chi,
\end{split}
\end{equation}
where $n=4m$ and 
\begin{eqnarray*}
a_{m} =  \left\{
\begin{array}{l}
1~~ if\ \  4m\equiv 0 ~~\rm{(mod~~8)}. \\ \\

2~~ if\ \  4m\equiv 4 ~~\rm{(mod~~8)}.
\end{array}
\right .
\end{eqnarray*} 
\indent Define $\Theta_n(k)$ to be the subgroup of $\overline{\Theta}_n$ consisting of those homotopy $n$-sphere $\Sigma^n$ which are the boundaries of $k$-connected $(n+1)$-dimensional compact manifolds, $1\leq k< [n/2]$. Thus, $\Theta_n(k)$ is the kernel of the natural map $i_k:\overline{\Theta}_n\to \Omega_n(k)$, where  $\Omega_n(k)$ is the $n$-dimensional group in $k$-connective cobordism theory \cite{Sto68} and $i_k$ sends $\Sigma^n$ to its cobordism class. Using surgery, we see $\Omega_{*}(1)$ is the usual oriented cobordism group. So $\overline{\Theta}_n=\Theta_n(1)$. Similarly, $\Omega_n(2)\cong \Omega_n^{Spin}$ $(n\geq 7)$; since $BSpin$ is, in fact, 3-connected, for $n\geq 8$,  $\Omega_n(2)\cong \Omega_n(3)$ and $\Theta_n(2)=\Theta_n(3)=bSpin_n$. Here $bSpin_n$ consists of homotopy $n$-sphere which bound spin manifolds.\\
\indent In \cite{Wal67}, C.T.C. Wall defined the Grothendieck group $\mathcal{G}^{2n+1}_{n}$, a homomorphism $\vartheta:\mathcal{G}^{2n+1}_{n}\to \overline{\Theta}_{2n}$ such that $\vartheta(\mathcal{G}^{2n+1}_{n})=\Theta_{2n}(n-1)$ and proved the following theorem :
\begin{theorem}$(Wall)$\label{wall}
 Let $M^{2n}$ be a $(n-1)$-connected $2n$-manifold and $\Sigma^{2n}$ be a homotopy sphere in $\overline{\Theta}_{2n}$. Then 
 $M\#\Sigma^{2n}$ is an orientation-preserving diffeomorphic to $M$ if and only if 
 \begin{itemize}
  \item [\rm{(i)}] $\Sigma^{2n}=0$ in $\overline{\Theta}_{2n}$ or 
  \item [\rm{(ii)}] $\chi \not \equiv 0~~ \rm{(mod~~2)}$ and $\Sigma^{2n}\in \vartheta(\mathcal{G}^{2n+1}_{n})=\Theta_{2n}(n-1)$
 \end{itemize} 
\end{theorem}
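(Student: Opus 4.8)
The plan is to follow Wall's handlebody analysis \cite{Wal62,Wal67}: first reduce the statement to a question about a single homomorphism, and then analyse that homomorphism by decomposing the self-diffeomorphisms of the handlebody obtained from $M$ by deleting a disc. For the reduction: since $M^{2n}$ is $(n-1)$-connected and $n\ge 3$, deleting the interior of a smooth $2n$-disc from $M$ yields a compact $(n-1)$-connected $2n$-manifold $W$ with $\partial W=\mathbb{S}^{2n-1}$; by \cite{Wal62} it is a handlebody, built from $\mathbb{D}^{2n}$ by attaching $k$ $n$-handles, whose attaching and framing data are recorded by the intersection form together with the map $\alpha$, hence by $\chi=S\circ\alpha$. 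Writing $M=W\cup_{\mathrm{id}}\mathbb{D}^{2n}$ and $M\#\Sigma^{2n}=W\cup_{\sigma}\mathbb{D}^{2n}$, where $\sigma$ is a clutching diffeomorphism of $\Sigma$ regarded in $\pi_0\mathrm{Diff}^{+}(\mathbb{S}^{2n-1})\cong\overline{\Theta}_{2n}$ (Cerf's theorem together with $\Theta_{2n}\cong\overline{\Theta}_{2n}$, both valid for $2n\ge 6$), I would use that a diffeomorphism of $\mathbb{S}^{2n-1}$ of trivial mapping class extends over $\mathbb{D}^{2n}$, and that a diffeomorphism $M\#\Sigma^{2n}\to M$ can be isotoped to respect these decompositions (uniqueness of smooth discs up to ambient isotopy), to obtain
\[
M\#\Sigma^{2n}\ \text{is orientation-preservingly diffeomorphic to}\ M\iff[\Sigma]\in\mathrm{im}\bigl(\rho\colon\pi_0\mathrm{Diff}^{+}(W)\to\pi_0\mathrm{Diff}^{+}(\partial W)\cong\overline{\Theta}_{2n}\bigr),
\]
where $\rho$ is restriction to the boundary. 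As $\mathrm{im}\,\rho$ is a subgroup and $\vartheta(\mathcal{G}^{2n+1}_n)=\Theta_{2n}(n-1)$ by \cite{Wal67}, the theorem reduces to showing that $\mathrm{im}\,\rho=\Theta_{2n}(n-1)$ when $\chi\not\equiv0\pmod{2}$, and $\mathrm{im}\,\rho=0$ when $\chi\equiv0\pmod{2}$.

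For this, following Wall, I would first show that up to isotopy every element of $\pi_0\mathrm{Diff}^{+}(W)$ is a product of maps realizing automorphisms of the quadratic form of $W$ (the intersection form together with $\alpha$) --- which may be chosen to fix a collar of $\partial W$ and hence lie in $\ker\rho$ --- and of ``spherical twists'' $T_x$: diffeomorphisms supported in a tubular neighbourhood $E_\xi\to\mathbb{S}^n_x$ of an embedded $n$-sphere representing a primitive $x\in H^n(M;\mathbb{Z})$, fixing the zero section and equal to the identity near $\partial E_\xi$. Thus $\mathrm{im}\,\rho$ is generated by the boundary twists $\rho(T_x)$, and the heart of the proof is their computation. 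To $T_x$ one attaches its trace, a compact $(n-1)$-connected $(2n+1)$-dimensional handlebody $V$ (built from $\mathbb{D}^{2n+1}$ by $n$-handles), so that $[V]\in\mathcal{G}^{2n+1}_n$ and $\rho(T_x)=\vartheta([V])$; then, tracking the two framings of $E_\xi$ that must be matched in performing the twist, together with Wall's formulae for $\vartheta$, one finds that $\rho(T_x)$ depends on $x$ only through the reduction of $\chi(x)\in\widetilde{KO}(\mathbb{S}^n)\cong\pi_{n-1}(SO_{n+1})$ modulo $2$: it vanishes when $\chi(x)\in2\,\widetilde{KO}(\mathbb{S}^n)$ (the relevant framing obstruction being then stably trivial), and, when $\chi(x)\notin2\,\widetilde{KO}(\mathbb{S}^n)$, it sweeps out all of $\vartheta(\mathcal{G}^{2n+1}_n)=\Theta_{2n}(n-1)$ as $V$ varies. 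Combined with the generation statement this gives exactly the required dichotomy, and hence the theorem.

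The delicate step --- and the main obstacle --- is the computation of $\rho(T_x)$: identifying the boundary twist of a spherical twist with $\vartheta$ of the ambient $(2n+1)$-handlebody, and proving that the parity of $\chi$ is precisely the controlling invariant. This is where Wall's handlebody machinery is genuinely needed: one must keep careful track of the unstable framing groups $\pi_n(SO_n)$ and of the successive stabilizations $\pi_n(SO_n)\to\pi_n(SO_{n+1})\to\pi_n(SO)$ (equivalently the $J$-homomorphism and the $bP$-obstruction), and treat the residue classes $n\equiv0,1,2,\dots\pmod{8}$ separately, according as $\widetilde{KO}(\mathbb{S}^n)$ is $\mathbb{Z}$, $\mathbb{Z}/2$, or $0$. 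In the degenerate dimensions $n=3$ and $n=7$ one has $\widetilde{KO}(\mathbb{S}^n)=0$, so $\chi\equiv0$ automatically and the theorem reduces to $I(M^{2n})=0$, already delivered by the preceding two steps (and consistent, e.g., with $\Theta_6=0$). Granting the surjectivity $\vartheta(\mathcal{G}^{2n+1}_n)=\Theta_{2n}(n-1)$ and the framing computations above, the remainder is formal.
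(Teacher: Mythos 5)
The paper does not actually prove this statement: it is quoted as Wall's theorem from \cite{Wal67}, after recalling that Wall constructed the Grothendieck group $\mathcal{G}^{2n+1}_{n}$ and the homomorphism $\vartheta$ with $\vartheta(\mathcal{G}^{2n+1}_{n})=\Theta_{2n}(n-1)$. So there is no internal argument to compare yours with; in the context of this paper the expected ``proof'' is the citation, and your text has to be judged as a reconstruction of Wall's own argument.

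As such a reconstruction it is a reasonable outline, but it has genuine gaps rather than being a proof. The reduction step is fine for $2n\geq 6$: writing $M=W\cup_{\mathrm{id}}\mathbb{D}^{2n}$ and $M\#\Sigma^{2n}=W\cup_{\sigma}\mathbb{D}^{2n}$, and using uniqueness of embedded discs together with $\pi_0\mathrm{Diff}^{+}(\mathbb{S}^{2n-1})\cong\overline{\Theta}_{2n}$, one does get $I(M)=\mathrm{im}\,\rho$ for the boundary-restriction homomorphism $\rho$. From that point on, however, you assert exactly the two statements that carry all of the content: (a) that every class in $\pi_0\mathrm{Diff}^{+}(W)$ is, modulo diffeomorphisms whose boundary restriction is trivial, a product of diffeomorphisms realizing automorphisms of the form data of \cite{Wal62} (chosen to fix the boundary) and spherical twists $T_x$ --- this is a hard structure theorem about mapping class groups of Wall's handlebodies, the subject of a separate paper of Wall and of later work of Kreck, and cannot be taken for granted; and (b) the computation of the boundary twists, namely that $\rho(T_x)$ vanishes when $\chi\equiv 0\ (\mathrm{mod}\ 2)$ and that the $\rho(T_x)$ generate all of $\vartheta(\mathcal{G}^{2n+1}_{n})=\Theta_{2n}(n-1)$ when $\chi\not\equiv 0\ (\mathrm{mod}\ 2)$. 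You explicitly label (b) ``the main obstacle'' and then ``grant'' it, but (a) and (b) together \emph{are} the theorem, so what remains is a plan, not a proof. There are also local imprecisions: for fixed $x$ the twist $T_x$ depends on a parameter in $\pi_n(SO_n)$ and determines its trace $V$, so $\rho(T_x)$ does not ``sweep out'' $\Theta_{2n}(n-1)$ ``as $V$ varies''; what is needed (and not addressed) is that every class of $\mathcal{G}^{2n+1}_{n}$, or at least enough classes to exhaust $\Theta_{2n}(n-1)$, is realized by twists on classes $x$ with $\chi(x)$ odd, together with the converse vanishing statement, and this is precisely where Wall's detailed case analysis in the residues of $n$ modulo $8$ lives.
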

We also need the following result from \cite{ABP67} :
\begin{theorem}$(Anderson, Brown, Peterson)$\label{ABP67}
 Let $\eta_n:\overline{\Theta}_n\to \Omega_n^{Spin}$ be the homomorphism such that $\eta_n$ sends $\Sigma^n$ to its spin cobordism class. Then
 $\eta_n\neq 0$ if and only if $n=8k+1$ or $8k+2$.
\end{theorem}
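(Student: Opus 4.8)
The plan is to derive the statement from the structure theorem for the Spin cobordism ring of \cite{ABP67}, together with the Kervaire--Milnor analysis of $\overline{\Theta}_n$ \cite{KM63} and Adams' detection of the $\mu$-family by real $K$-theory. Note first that $\ker\eta_n=bSpin_n=\Theta_n(2)$ in the notation above, so the assertion is that $\Theta_n(2)=\overline{\Theta}_n$ exactly when $n\not\equiv 1,2~(\mathrm{mod}~8)$. Recall also the following standing facts. Every homotopy sphere $\Sigma^n$ is stably parallelizable \cite{KM63}, so all its Stiefel--Whitney and Pontrjagin numbers vanish and $\widehat{A}(\Sigma^n)=0$. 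For $n\geq 2$ the sphere $S^n$ has a unique spin structure, so forgetting a stable framing defines a homomorphism $F:\pi_n^s=\Omega_n^{fr}\to\Omega_n^{Spin}$ with $\eta_n(\Sigma^n)=F(x)$ for any framing $x\in\pi_n^s$ of $\Sigma^n$; since $S^n$ always bounds $D^{n+1}$ as a spin manifold, $F$ annihilates $\mathrm{im}\,J$ and factors as $\overline{F}:\mathrm{coker}\,J_n\to\Omega_n^{Spin}$. Finally, elements of $bP_{n+1}$ bound parallelizable, hence spin, manifolds, so $\eta_n$ vanishes on $bP_{n+1}$ and we obtain $\eta_n=\overline{F}\circ p$, where $p:\overline{\Theta}_n/bP_{n+1}\hookrightarrow\mathrm{coker}\,J_n$ is the Kervaire--Milnor monomorphism sending $\Sigma^n$ to the class of a framing of $\Sigma^n$.

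The first step is the vanishing direction: $\eta_n=0$ whenever $n\not\equiv 1,2~(\mathrm{mod}~8)$. By the $2$-primary splitting of $MSpin$ into $ko$-, $ko\langle 2\rangle$- and $H\mathbb{Z}_2$-summands, together with the identification of $\Omega_*^{Spin}\otimes\mathbb{Z}[1/2]$ with $\Omega_*^{SO}\otimes\mathbb{Z}[1/2]$ from \cite{ABP67}, a class in $\Omega_n^{Spin}$ is zero as soon as all of its mod $2$ characteristic numbers, its Pontrjagin numbers, and its $KO$-characteristic numbers vanish. On a stably parallelizable manifold the first two families vanish identically; a $KO$-characteristic number coming from a summand $\Sigma^{4j}ko$ or $\Sigma^{4j}ko\langle 2\rangle$ with $j\geq1$ is a positive-degree polynomial in the Pontrjagin classes paired against a class of $KO_{n-4j}(\mathrm{pt})$, so it too vanishes on $\Sigma^n$; and the remaining, top, $KO$-invariant is the $\alpha$-invariant $\alpha:\Omega_n^{Spin}\to KO_n(\mathrm{pt})$, which is a rational multiple of $\widehat{A}$ when $n\equiv 0,4~(\mathrm{mod}~8)$ (hence zero on $\Sigma^n$) and takes values in the zero group when $n\equiv 3,5,6,7~(\mathrm{mod}~8)$. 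Thus $\eta_n(\Sigma^n)=0$ for every homotopy sphere, i.e. $\eta_n=0$.

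The second step is non-vanishing for $n=8k+1$ and $n=8k+2$ with $k\geq 1$. Here $KO_n(\mathrm{pt})\cong\mathbb{Z}_2$, and the composite $\alpha\circ F:\pi_n^s\to KO_n(\mathrm{pt})$ is the $KO$-Hurewicz homomorphism $h$, since both are induced by the Atiyah--Bott--Shapiro orientation $MSpin\to KO$ precomposed with the canonical map from the sphere spectrum. By Adams' work on the $J$-homomorphism, for $k\geq1$ there are elements $\mu_{8k+1}\in\pi_{8k+1}^s$ and $\eta\mu_{8k+1}\in\pi_{8k+2}^s$ ($\eta$ the Hopf element) on which $h$ is nonzero. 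Since $h$ annihilates $\mathrm{im}\,J$ (because $F$ does), their images in $\mathrm{coker}\,J_n$ are nonzero and $\overline{h}:\mathrm{coker}\,J_n\to KO_n(\mathrm{pt})$ is onto. Because neither $8k+1$ nor $8k+2$ has the form $2^j-2$ when $k\geq1$, Browder's theorem makes the Kervaire invariant vanish in these dimensions, and the Kervaire--Milnor exact sequence then gives that $p:\overline{\Theta}_n/bP_{n+1}\to\mathrm{coker}\,J_n$ is onto. Consequently there is a homotopy sphere $\Sigma^n$ with $p([\Sigma^n])$ equal to the class of $\mu_{8k+1}$ (resp. $\eta\mu_{8k+1}$), whence $\alpha(\eta_n(\Sigma^n))=\overline{h}(p([\Sigma^n]))\neq 0$ and in particular $\eta_n(\Sigma^n)\neq 0$; so $\eta_n\neq0$. (For $k=0$ one has $\overline{\Theta}_1=\overline{\Theta}_2=0$, so $\eta_1=\eta_2=0$; only dimensions $n\geq 8$ intervene later.)

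The main obstacle is the non-vanishing direction, and more precisely the realization of a nontrivial $\alpha$-invariant by a genuine homotopy sphere rather than by an arbitrary framed manifold: this is exactly where one needs Adams' computation showing $h$ is onto $KO_n(\mathrm{pt})$ for $n\equiv1,2~(\mathrm{mod}~8)$ together with the surjectivity of the Kervaire--Milnor map $p$ in those residue classes (which is why the dimension bookkeeping avoiding the Kervaire dimensions $2^j-2$ is essential). The vanishing direction, by contrast, is a fairly routine extraction from the characteristic-number description of $\Omega_*^{Spin}$, once one observes that every characteristic number other than the top $\alpha$-invariant involves a positive-degree Pontrjagin class or a Stiefel--Whitney class and hence dies on a stably parallelizable manifold.
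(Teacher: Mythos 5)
The paper never proves this statement: it is imported wholesale as a known theorem of Anderson, Brown and Peterson \cite{ABP67}, and the only thing the paper extracts from it is the vanishing direction ($\eta_n=0$ for $n\equiv 0 \ (\mathrm{mod}\ 8)$, used for dimensions $8$ and $16$ in the proof of Theorem~\ref{iner0}). So there is no internal argument to compare yours against; what can be assessed is whether your outline is a sound proof of the quoted result, and in structure it is. Your reduction $\eta_n=\overline{F}\circ p$ (using that homotopy spheres are stably parallelizable, that $S^n$ for $n\geq 2$ has a unique spin structure and bounds $D^{n+1}$ spin, and that $bP_{n+1}$ bounds parallelizable manifolds) is correct; the vanishing direction is the standard extraction from the Anderson--Brown--Peterson characteristic-number theorem, since on a stably parallelizable manifold all Stiefel--Whitney and Pontrjagin numbers and all $KO$-characteristic numbers attached to nontrivial partitions die, leaving only $\alpha\in KO_n$, which vanishes for $n\equiv 0,4$ and lives in the zero group for $n\equiv 3,5,6,7 \ (\mathrm{mod}\ 8)$; and the non-vanishing direction via $\alpha\circ F=$ the $KO$-Hurewicz map, Adams' $\mu$-family, and surjectivity of $\overline{\Theta}_n\to \mathrm{coker}\,J_n$ (automatic for $n$ odd, and via Browder's theorem for $n=8k+2$, $k\geq 1$, since $8k+2\neq 2^j-2$ there) is a valid, if anachronistic, route --- Browder's theorem postdates \cite{ABP67} and is not among this paper's references, so strictly speaking you are proving the statement by later machinery rather than reconstructing the original argument. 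Two small points worth flagging: as you note, the statement as printed is false for $k=0$ (where $\overline{\Theta}_1=\overline{\Theta}_2=0$ force $\eta_1=\eta_2=0$), so the theorem must be read for $k\geq 1$; and everything substantive in your plan is delegated to the cited big theorems (ABP structure theorem, Adams' detection of $\mu_{8k+1}$ and $\eta\mu_{8k+1}$, Kervaire--Milnor, Browder), which is acceptable for a plan but means the proposal is an assembly of known results rather than a self-contained proof --- exactly the role the statement plays in the paper itself.
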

{\bf{Proof of Theorem \ref{iner0}:}}
Let $\xi$ be a generator of $H^{n}(M^{2n}; \mathbb{Z})$. Consider the case $n=4$. Then by Itiro Tamura \cite{Tam61} and (\ref{pontr1}), the Pontrjagin class of $M^{2n}$ is given by
\begin{align*}
p_1(M^{2n})&=2(2h+1)\xi=\pm 2\chi,
\end{align*}
where $h\in \mathbb{Z}$. This implies that $$\chi=\pm (2h+1)\xi.$$ Likewise, for $n=8$, we have
\begin{align*}
p_2(M^{2n})&=6(2k+1)\xi=\pm 6\chi,
\end{align*}
where $k\in \mathbb{Z}$. This implies that $$\chi=\pm (2k+1)\xi.$$ Therefore in either case, $\chi\not \equiv 0 ~\rm{(mod~2)}$. Now by Theorem \ref{wall}, it follows that $$I(M^{2n})=\Theta_{2n}(n-1).$$ Since $\Theta_{2n}(n-1)$ is the kernel of the natural map $i_{n-1}:\overline{\Theta}_{2n}\to \Omega_{2n}(n-1)$, where $\Omega_{2n}(n-1)\cong \Omega_{8}^{Spin}$ for $n=4$ and $\Omega_{2n}(n-1)\cong \Omega_{16}^{String}\cong \mathbb{Z}\oplus \mathbb{Z}$ for $n=8$ \cite{Gia71}. Now by Theorem \ref{ABP67} and using the fact that $\overline{\Theta}_{16}\cong \mathbb{Z}_2$ \cite{KM63}, we have $i_{n-1}=0$ for $n=4$ and $8$. This shows that $\Theta_{2n}(n-1)=\overline{\Theta}_{2n}$. This implies that $$I(M^{2n})\cong \mathbb{Z}_2.$$ This completes the proof of Theorem \ref{iner0}.
\section{Smooth structures of $(n-1)$-connected $2n$-manifolds}
\begin{definition}\label{smoo.stru}($Cat=Diff~~{\rm{or}}~~ Top$-structure sets)\cite{Cro10}~\rm
 Let $M$ be a closed $Cat$-manifold. We define the $Cat$-structure set $\mathcal{S}^{Cat}(M)$ to be the set of equivalence classes of pairs $(N,f)$ where $N$ is a closed $Cat$-manifold and $f: N\to M$ is a homotopy equivalence.
And the equivalence relation is defined as follows :
\begin{center}
  $(N_1,f_1)\sim  (N_2,f_2)$ if there is a $Cat$-isomorphism $\phi:N_1\to N_2$ \\
  such that $f_2\circ h$ is homotopic to $f_1$.
\end{center}
We will denote the class in $\mathcal{S}^{Cat}(M)$ of $(N,f)$ by $[(N,f)]$. The base point of $S^{Cat}(M)$ is the equivalence class $[(M, \rm{id})]$ of $\rm{id} : M\to M$.
 \end{definition}
The forgetful maps $F_{Diff} : \mathcal{S}^{Diff}(M)\to \mathcal{S}^{Top}(M)$ and $F_{Con} : \mathcal{C}(M)\to \mathcal{S}^{Diff}(M)$ fit into a short exact sequence of pointed sets \cite{Cro10}:
$$\mathcal{C}(M)\stackrel{F_{Con}} {\longrightarrow} \mathcal{S}^{Diff}(M)\stackrel{F_{Diff}} {\longrightarrow} \mathcal{S}^{Top}(M).$$
\begin{theorem}\label{class}
Let $n$ be any integer greater than $3$ such that $\Theta_{n}$ and $\Theta_{n+1}$ are trivial and $M^{2n}$ be a closed smooth $(n-1)$-connected $2n$-manifold. Let $f:N\to M$ be a homeomorphism where $N$ is a closed smooth manifold. Then
\begin{itemize}
\item[(i)]there exists a diffeomorphism $\phi:N\to M\#\Sigma^{2n}$, where $\Sigma^{2n}\in \overline{\Theta}_{2n}$ such that the following diagram commutes up to homotopy:
$$\xymatrix{
N \ar[r]^\phi \ar[rd]^f &M\#\Sigma^{2n} \ar[d]^{\rm{id}}\\
&M}$$
\item[(ii)] If $I_h(M)=\overline{\Theta}_{2n}$, then $f:N\to M$ is homotopic to a diffeomorphism.
\end{itemize}
\end{theorem}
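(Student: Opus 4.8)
The plan is to reduce part (i) to the computation of $\mathcal{C}(M^{2n})$ already carried out in Theorem \ref{conwall}(ii), and then to obtain part (ii) from part (i) by a short argument using the definition of the homotopy inertia group. No surgery-theoretic work is needed beyond what Section 2 provides; the role of the hypotheses ``$n>3$, $\Theta_n$ and $\Theta_{n+1}$ trivial'' is exactly to make Theorem \ref{conwall}(ii) available.

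For (i), I would argue as follows. Under the stated hypotheses Theorem \ref{conwall}(ii) gives $\mathcal{C}(M^{2n})=\{[M^{2n}\#\Sigma]\mid\Sigma\in\overline{\Theta}_{2n}\}$. A homeomorphism $f:N\to M^{2n}$ makes $(N,f)$ a smoothing of $M^{2n}$ and hence represents an element of $\mathcal{C}(M^{2n})$; by the displayed description this element is $[M^{2n}\#\Sigma^{2n}]$ for some $\Sigma^{2n}\in\overline{\Theta}_{2n}$, i.e. $(N,f)$ is concordant to $(M^{2n}\#\Sigma^{2n},\mathrm{id})$. Unwinding the definition of concordance from Section 2, this produces a diffeomorphism $\phi:N\to M^{2n}\#\Sigma^{2n}$ together with a homeomorphism $F:N\times[0,1]\to M^{2n}\times[0,1]$ with $F|_{N\times 0}=f$ and $F|_{N\times 1}=\mathrm{id}\circ\phi$, where $\mathrm{id}:M^{2n}\#\Sigma^{2n}\to M^{2n}$ is the tautological identification of underlying spaces. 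Composing $F$ with the projection $M^{2n}\times[0,1]\to M^{2n}$ yields a homotopy from $f$ to $\mathrm{id}\circ\phi$, which is precisely the assertion that the triangle commutes up to homotopy. This proves (i).

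For (ii), assume $I_h(M^{2n})=\overline{\Theta}_{2n}$. Then the sphere $\Sigma^{2n}$ furnished by (i) lies in $I_h(M^{2n})$, so by the definition of the homotopy inertia group there is a diffeomorphism $h:M^{2n}\to M^{2n}\#\Sigma^{2n}$ homotopic to the tautological identification $\mathrm{id}:M^{2n}\to M^{2n}\#\Sigma^{2n}$. That identification is a homotopy inverse of the identification $\mathrm{id}:M^{2n}\#\Sigma^{2n}\to M^{2n}$ used in (i), so $h^{-1}$ is homotopic to $\mathrm{id}:M^{2n}\#\Sigma^{2n}\to M^{2n}$; consequently $h^{-1}\circ\phi:N\to M^{2n}$ is a diffeomorphism homotopic to $\mathrm{id}\circ\phi\simeq f$. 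Hence $f$ is homotopic to a diffeomorphism.

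The only step that needs a little care is the passage from ``concordant'' to ``homotopic'' in (i): one must use that the concordance homeomorphism $F$ restricts to $f$ on $N\times\{0\}$ and to $\mathrm{id}\circ\phi$ on $N\times\{1\}$, mapping these into the corresponding ends $M^{2n}\times\{0\}$ and $M^{2n}\times\{1\}$, so that projection to $M^{2n}$ genuinely produces a homotopy between the two maps; this is built into the definition of topological concordance recalled in Section 2. Everything else is formal, and I do not expect a substantive obstacle here — the essential content has already been absorbed into Theorem \ref{conwall}.
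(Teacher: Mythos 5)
Your proof is correct and follows essentially the same route as the paper: both hinge on Theorem~\ref{conwall}(ii) to identify $\mathcal{C}(M)$ with $\{[M\#\Sigma]\}$, and deduce (ii) from (i) via the definition of $I_h(M)$. The only cosmetic difference is that you unwind the concordance directly and project to $M$ to produce the homotopy, whereas the paper routes the same content through the exact sequence $\mathcal{C}(M)\to\mathcal{S}^{Diff}(M)\to\mathcal{S}^{Top}(M)$.
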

\begin{proof}
Consider the short exact sequence of pointed sets $$\mathcal{C}(M)\stackrel{F_{Con}} {\longrightarrow}\mathcal{S}^{Diff}(M)\stackrel{F_{Diff}} {\longrightarrow} \mathcal{S}^{Top}(M).$$  By Theorem \ref{conwall}(ii), we have 
$$\mathcal{C}(M)= \left \{[M\#\Sigma]~~|~~\Sigma\in \overline{\Theta}_{2n} \right \}\cong \overline{\Theta}_{2n}.$$
Since  $[(N,f)]\in F_{Diff}^{-1}([(M, \rm{id})])$, we obtain  $$[(N,f)]\in {\rm{Im}}(F_{Con})=\left \{[M\#\Sigma]~~|~~\Sigma\in \overline{\Theta}_{2n} \right \}.$$ This implies that there exists  a homotopy sphere $\Sigma^{2n}\in \overline{\Theta}_{2n}$ such that $(N,f)\sim (M\#\Sigma^{2n}, \rm{id})$ in $\mathcal{S}^{Diff}(M)$. This implies that there exists a diffeomorphism $\phi:N\to M\#\Sigma^{2n}$ such that $f$ is homotopic to $\rm{id}\circ \phi$. This proves (i).\\
\indent If $I_h(M)=\overline{\Theta}_{2n}$, then ${\rm{Im}}(F_{Con})=\{[(M, \rm{id})]\}$ and hence $(N, f)\sim (M, \rm{id})$ in $\mathcal{S}^{Diff}(M)$. This shows that $f:N\to M$ is homotopic to a diffeomorphism $N\to M$. This proves (ii).
\end{proof}
\begin{theorem}\label{atmosttwo}
Let $n$ be any integer greater than $3$ such that $\Theta_{n}$ and $\Theta_{n+1}$ are trivial and $M^{2n}$ be a closed smooth $(n-1)$-connected $2n$-manifold. Then 
the number of distinct smooth structures on $M^{2n}$ up to diffeomorphism is less than or equal to the cardinality of $\overline{\Theta}_{2n}$. In particular, the set of diffeomorphism classes of smooth structures on $M^{2n}$ is $\left \{[M\#\Sigma]~~|~~ \Sigma\in \overline{\Theta}_{2n} \right \}$. 
\end{theorem}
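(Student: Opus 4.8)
The plan is to read the statement directly off Theorem~\ref{class}, which already carries all the substance. First I would pin down what is being counted: a smooth structure on $M^{2n}$ \emph{up to diffeomorphism} is a diffeomorphism class of closed smooth manifolds $N$ that admit a homeomorphism $f\colon N\to M^{2n}$. So the theorem amounts to two assertions: (a) every such $N$ is orientation-preserving diffeomorphic to $M^{2n}\#\Sigma^{2n}$ for some $\Sigma^{2n}\in\overline{\Theta}_{2n}$; and (b) conversely, each $M^{2n}\#\Sigma^{2n}$ with $\Sigma^{2n}\in\overline{\Theta}_{2n}$ does represent a smooth structure on $M^{2n}$.

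For (a), fix a pair $(N,f)$ with $f\colon N\to M^{2n}$ a homeomorphism. Since $n>3$ and $\Theta_n,\Theta_{n+1}$ are trivial, Theorem~\ref{class}(i) applies and produces a homotopy sphere $\Sigma^{2n}\in\overline{\Theta}_{2n}$ together with a diffeomorphism $\phi\colon N\to M^{2n}\#\Sigma^{2n}$. Hence the diffeomorphism class of $N$ lies in the set $\{\,[M^{2n}\#\Sigma]\mid\Sigma\in\overline{\Theta}_{2n}\,\}$, which has cardinality at most $|\overline{\Theta}_{2n}|$. For (b), recall from the introduction that $M^{2n}\#\Sigma^{2n}$ is by convention a smooth manifold with the same underlying topological space as $M^{2n}$, so the identity map $M^{2n}\#\Sigma^{2n}\to M^{2n}$ is a homeomorphism; thus every $M^{2n}\#\Sigma^{2n}$ is among the smooth structures being counted. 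Combining (a) and (b) shows that the set of diffeomorphism classes of smooth structures on $M^{2n}$ is exactly $\{\,[M^{2n}\#\Sigma]\mid\Sigma\in\overline{\Theta}_{2n}\,\}$, of cardinality $\le|\overline{\Theta}_{2n}|$.

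The only point to watch is that the surjection $\overline{\Theta}_{2n}\to\{\text{diffeomorphism classes of smooth structures on }M^{2n}\}$, $\Sigma\mapsto[M^{2n}\#\Sigma]$, need not be injective: one has $M^{2n}\#\Sigma\cong M^{2n}\#\Sigma'$ orientation-preservingly precisely when $\Sigma\,(\Sigma')^{-1}\in I(M^{2n})$, so the count in fact drops when the inertia group is nontrivial (as it is, for example, in the setting of Theorem~\ref{iner0}). This is exactly why the statement claims only the inequality ``$\le|\overline{\Theta}_{2n}|$'' and not an equality, and nothing further is needed for the asserted bound. I do not anticipate any real obstacle: the genuine input — upgrading a homeomorphism $N\to M$ to a diffeomorphism $N\to M\#\Sigma$ — is precisely Theorem~\ref{class}(i), and everything else here is bookkeeping about the difference between structure sets and diffeomorphism classes.
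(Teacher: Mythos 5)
Your argument is correct and follows the paper's own proof essentially verbatim: both deduce the result directly from Theorem~\ref{class}(i), with your part~(b) and the remarks on non-injectivity being harmless bookkeeping that the paper leaves implicit. No discrepancies to report.
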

\begin{proof}
By Theorem \ref{class}(i), if $N$ is a closed smooth manifold homeomorphic to $M$, then $N$ is diffeomorphic to $M\#\Sigma^{2n}$ for some homotopy $2n$-sphere $\Sigma^{2n}$. This implies that the set of diffeomorphism classes of smooth structures on $M^{2n}$ is $\left \{[M\#\Sigma]~~|~~ \Sigma\in \overline{\Theta}_{2n} \right \}$. This shows that the number of distinct smooth structures on $M^{2n}$ up to diffeomorphism is less than or equal to the cardinality of $\overline{\Theta}_{2n}$.
\end{proof}
\begin{remark}\rm
\indent
\begin{itemize}
\item[(1)] By Theorem \ref{atmosttwo}, every closed  smooth 3-connected 8-manifold has at most two distinct smooth structures up to diffeomorphism.
 \item[(2)]If  $M^{8}$ is a closed smooth 3-connected 8-manifold such that  $H^{4}(M;\mathbb{Z})\cong \mathbb{Z}$, then by Theorem \ref{iner0}, $I(M)\cong \mathbb{Z}_2$. Now by Theorem \ref{atmosttwo}, $M$ has a unique smooth structure up to diffeomorphism.
 \item[(3)] If $M=\mathbb{S}^4\times \mathbb{S}^4$, then by Theorem \ref{atmosttwo}, $\mathbb{S}^4\times \mathbb{S}^4$ has at most two distinct smooth structures up to diffeomorphism, namely, $\left \{\textbf{[}\mathbb{S}^4\times \mathbb{S}^4\textbf{]}, \textbf{[}\mathbb{S}^4\times \mathbb{S}^4\#\Sigma\textbf{]}\right \}$, where $\Sigma$ is the exotic 8-sphere. However, by \cite[Theorem A]{Sch71}, $I(\mathbb{S}^4\times \mathbb{S}^4)=0$. This implies that $\mathbb{S}^4\times \mathbb{S}^4$ has exactly two distinct smooth structures.
 \end{itemize}
\end{remark}
\begin{theorem}
 Let $M$ be a closed smooth $3$-connected $8$-manifold with stable tangential invariant $\chi=S\alpha:H_4(M)\to \pi_3(SO)=\mathbb{Z}$. Then $M$ has exactly two distinct smooth structures up to diffeomorphism if and only if ${\rm{Im}}(S\alpha)\subseteq 2\mathbb{Z}$.
\end{theorem}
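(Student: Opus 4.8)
The plan is to read off the number of smooth structures from Theorem \ref{atmosttwo}, to decide whether $M$ is diffeomorphic to $M\#\Sigma$ via Wall's criterion (Theorem \ref{wall}), and then to rephrase the resulting parity condition on $\chi$ in terms of $\mathrm{Im}(S\alpha)$. Since $n=4$ satisfies $\Theta_4=\Theta_5=0$ and $\overline{\Theta}_8\cong\mathbb{Z}_2$, Theorem \ref{atmosttwo} shows that the diffeomorphism classes of smooth structures on $M$ are precisely $[M]$ and $[M\#\Sigma]$, where $\Sigma$ is the exotic $8$-sphere generating $\overline{\Theta}_8$. Hence $M$ has exactly two distinct smooth structures up to diffeomorphism if and only if $M\#\Sigma$ is \emph{not} diffeomorphic to $M$, i.e. if and only if $\Sigma\notin I(M)$ (using the paper's convention that diffeomorphisms are orientation preserving, so that Theorem \ref{wall} applies directly).

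Next I would apply Theorem \ref{wall} with $n=4$ (note $2n=8\neq4$). As $\Sigma\neq 0$ in $\overline{\Theta}_8$, the criterion says that $\Sigma\in I(M)$ if and only if $\chi\not\equiv 0\pmod 2$ and $\Sigma\in\vartheta(\mathcal{G}^{9}_{4})=\Theta_8(3)$. Exactly as in the proof of Theorem \ref{iner0}, $\Theta_8(3)=\ker\!\big(i_3\colon\overline{\Theta}_8\to\Omega_8(3)\big)$ with $\Omega_8(3)\cong\Omega_8^{Spin}$, and by Theorem \ref{ABP67} the spin-cobordism homomorphism $\eta_8$ is zero (since $8$ is neither $8k+1$ nor $8k+2$); hence $i_3=0$ and $\Theta_8(3)=\overline{\Theta}_8\ni\Sigma$. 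Thus the second clause holds automatically, so $\Sigma\in I(M)$ if and only if $\chi\not\equiv 0\pmod 2$; equivalently, $M$ has exactly two smooth structures up to diffeomorphism if and only if $\chi\equiv 0\pmod 2$.

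Finally I would identify the condition $\chi\equiv 0\pmod 2$ with $\mathrm{Im}(S\alpha)\subseteq 2\mathbb{Z}$. Using $\widetilde{KO}(\mathbb{S}^4)\cong\pi_3(SO)\cong\mathbb{Z}$ and the $3$-connectedness of $M$ (so $H_4(M;\mathbb{Z})$ is free abelian), the class $\chi=S\alpha$ is just the homomorphism $S\alpha\colon H_4(M;\mathbb{Z})\to\mathbb{Z}$, its mod-$2$ reduction in $H^4(M;\mathbb{Z}/2)$ is the composite $H_4(M;\mathbb{Z})\xrightarrow{S\alpha}\mathbb{Z}\to\mathbb{Z}/2$, and this composite is the zero homomorphism precisely when $S\alpha(x)$ is even for every $x\in H_4(M;\mathbb{Z})$, that is, when $\mathrm{Im}(S\alpha)\subseteq 2\mathbb{Z}$. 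Chaining the three equivalences proves the theorem. The only steps requiring any care are this last translation and the verification that $\Theta_8(3)$ exhausts $\overline{\Theta}_8$ (so that Wall's connectivity clause imposes no extra restriction on $\Sigma$); both are straightforward but essential to the clean \emph{if and only if}.
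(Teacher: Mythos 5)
Your proof is correct and follows essentially the same route as the paper's: apply Theorem \ref{atmosttwo} to reduce to the question of whether $M\#\Sigma$ is diffeomorphic to $M$, invoke Wall's criterion (Theorem \ref{wall}) together with the identity $\Theta_8(3)=\overline{\Theta}_8$ to reduce that to the parity of $\chi$, and then translate into the condition $\mathrm{Im}(S\alpha)\subseteq 2\mathbb{Z}$. You merely make explicit two steps the paper leaves implicit, namely the re-derivation of $\Theta_8(3)=\overline{\Theta}_8$ from Theorem \ref{ABP67} and the identification of $\chi\equiv 0\pmod 2$ with $\mathrm{Im}(S\alpha)\subseteq 2\mathbb{Z}$.
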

\begin{proof}
Suppose $M$ has exactly two distinct smooth structures up to diffeomorphism. Then by Theorem \ref{atmosttwo}, $M$ and $M\#\Sigma$ are not diffeomorphic, where $\Sigma$ is the exotic 8-sphere. Since $\overline{\Theta}_8=\Theta_8(3)$, by Theorem \ref{wall}, the stable tangential invariant $\chi$ is zero ${\rm(mod~2)}$ and hence ${\rm{Im}}(S\alpha)\subseteq 2\mathbb{Z}$. Conversely, suppose ${\rm{Im}}(S\alpha)\subseteq 2\mathbb{Z}$. Now by Theorem \ref{wall}, $M$ can not be diffeomorphic to $M\#\Sigma$, where $\Sigma$ is the exotic 8-sphere. Now by Theorem \ref{atmosttwo}, $M$ has exactly two distinct smooth structures up to diffeomorphism.
\end{proof}
\begin{remark}\rm
If $n=2$, $3$, $5$, $6$, $7$ ${\rm{(mod~ 8)}}$ or the stable tangential invariant $\chi$ of $M^{2n}$ is zero ${\rm{(mod~ 2)}}$, then by \cite[Corollary, pp. 289]{Wal67} and Theorem \ref{wall}, we have $I(M^{2n})=0$. So, by Theorem \ref{atmosttwo}, we have the following:
\end{remark}
\begin{theorem}\label{tangent}
 Let $n$ be any integer greater than $3$ such that $\Theta_{n}$ and $\Theta_{n+1}$ are trivial and $M^{2n}$ be a closed smooth $(n-1)$-connected $2n$-manifold. If $n=$2, 3, 5, 6, 7 {\rm{(mod 8)}} or the stable tangential invariant $\chi$ of $M^{2n}$ is zero ${\rm{(mod~ 2)}}$, then the set of diffeomorphism classes of smooth structures on $M^{2n}$ is in one-to-one correspondence with group $\overline{\Theta}_{2n}$. 
\end{theorem}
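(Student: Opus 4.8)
The plan is to read this off from the Remark immediately preceding the statement together with Theorem~\ref{atmosttwo}. Under either hypothesis---$n\equiv 2,3,5,6,7 \pmod{8}$, or the stable tangential invariant $\chi$ of $M^{2n}$ vanishing mod $2$---that Remark (via Wall's Theorem~\ref{wall} and \cite[Corollary, pp.~289]{Wal67}) gives $I(M^{2n})=0$. Since moreover $n>3$ and $\Theta_n$, $\Theta_{n+1}$ are trivial, Theorem~\ref{atmosttwo} identifies the set of orientation-preserving diffeomorphism classes of smooth structures on $M^{2n}$ with $\{\,[M\#\Sigma]\mid \Sigma\in\overline{\Theta}_{2n}\,\}$; equivalently, the assignment $\Phi:\overline{\Theta}_{2n}\to\{\text{diffeomorphism classes of smoothings of }M^{2n}\}$, $\Sigma\mapsto[M\#\Sigma]$, is surjective. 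So the whole statement reduces to showing that $\Phi$ is injective, i.e.\ that the $\overline{\Theta}_{2n}$-many classes $[M\#\Sigma]$ are pairwise distinct.

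For injectivity I would argue as follows. Suppose $\Sigma_1,\Sigma_2\in\overline{\Theta}_{2n}$ satisfy $[M\#\Sigma_1]=[M\#\Sigma_2]$, i.e.\ there is an orientation-preserving diffeomorphism $M\#\Sigma_1\to M\#\Sigma_2$. Taking the connected sum of this diffeomorphism with the identity of $\Sigma_2^{-}$ and using the standard facts that $(N\#\Sigma)\#\Sigma'$ is orientation-preservingly diffeomorphic to $N\#(\Sigma\#\Sigma')$ and that $\Sigma_2\#\Sigma_2^{-}$ is the standard sphere, one obtains an orientation-preserving diffeomorphism $M\#(\Sigma_1\#\Sigma_2^{-})\to M$. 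By the definition of the inertia group this forces $\Sigma_1\#\Sigma_2^{-}\in I(M^{2n})=0$, hence $\Sigma_1=\Sigma_2$ in $\overline{\Theta}_{2n}$, and $\Phi$ is a bijection. (Equivalently, one can phrase this using the connected-sum action of the abelian group $\overline{\Theta}_{2n}$ on the set of diffeomorphism classes of smoothings of $M^{2n}$: Theorem~\ref{atmosttwo} says the action is transitive, the stabilizer of $[M,\mathrm{id}]$ is by definition $I(M^{2n})$, and since the group is abelian all point stabilizers coincide; so the unique orbit has cardinality $|\overline{\Theta}_{2n}|/|I(M^{2n})|=|\overline{\Theta}_{2n}|$, the orbit map being the desired correspondence.)

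I do not expect any real obstacle here: all the substance is contained in the Remark (hence in Wall's Theorem~\ref{wall} and \cite{Wal67}) and in Theorem~\ref{atmosttwo}, and what remains is formal. The only points needing a little care are bookkeeping ones---checking that connected sum with a homotopy sphere is well defined up to orientation-preserving diffeomorphism at the level of diffeomorphism classes, so that the equivalence ``$[M\#\Sigma_1]=[M\#\Sigma_2]\iff \Sigma_1\#\Sigma_2^{-}\in I(M^{2n})$'' is legitimate, and keeping track of the fact that, consistent with the conventions of the paper, ``diffeomorphism'' here means orientation-preserving diffeomorphism throughout.
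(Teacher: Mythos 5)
Your proof is correct and takes essentially the same route as the paper: the preceding Remark supplies $I(M^{2n})=0$ via Wall's results, Theorem~\ref{atmosttwo} gives surjectivity of $\Sigma\mapsto[M\#\Sigma]$, and $I(M^{2n})=0$ yields injectivity. The only difference is that you spell out the standard argument (via the connected-sum action and the identification of the stabilizer with the inertia group) that the paper leaves implicit in the single phrase ``So, by Theorem~\ref{atmosttwo}.''
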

\begin{remark}\rm
\indent
\begin{itemize}
 \item[(1)] By Theorem \ref{tangent}, every closed smooth 4-connected 10-manifold has exactly six distinct smooth structures, namely, $\left \{[M\#\Sigma]~~|~~ \Sigma\in \overline{\Theta}_{10}\cong \mathbb{Z}_{6} \right \}.$
 \item[(2)]If $M^{2n}$ is $n$-parallelisable, almost parallelisable or $\pi$-manifold, then the stable tangential invariant $\chi$ of $M$ is zero \cite{Wal62}. Then by Theorem \ref{tangent}, we have the following : 
 \end{itemize}
\end{remark}
\begin{corollary}
 Let $n$ be any integer greater than $3$ such that $\Theta_{n}$ and $\Theta_{n+1}$ are trivial and $M^{2n}$ be a closed smooth $(n-1)$-connected $2n$-manifold.
If $M^{2n}$ is $n$-parallelisable, almost parallelisable or $\pi$-manifold, then the set of diffeomorphism classes of smooth structures on $M^{2n}$ is in one-to-one correspondence with group $\overline{\Theta}_{2n}$. 
\end{corollary}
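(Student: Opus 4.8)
The plan is to deduce the Corollary directly from Theorem \ref{tangent}. Under the stated hypotheses ($n>3$ with $\Theta_n$ and $\Theta_{n+1}$ trivial), Theorem \ref{tangent} already gives a bijection between the diffeomorphism classes of smooth structures on $M^{2n}$ and $\overline{\Theta}_{2n}$ as soon as one of its two alternatives holds: either $n \equiv 2,3,5,6,7 \pmod 8$, or the stable tangential invariant $\chi$ of $M^{2n}$ is zero $\pmod 2$. So the only thing to check is that each of the three conditions on $M^{2n}$ — being $n$-parallelisable, almost parallelisable, or a $\pi$-manifold — forces $\chi$ to vanish (hence in particular $\chi \equiv 0 \pmod 2$).

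First I would recall that $\chi = S\circ\alpha \in H^n(M;\widetilde{KO}(\mathbb{S}^n))$ is, by Wall \cite{Wal62}, exactly the obstruction to triviality of the stable tangent bundle over the $n$-skeleton of $M^{2n}$ (equivalently, since $M^{2n}$ is $(n-1)$-connected, over the wedge of $n$-spheres carrying $H_n(M)$). If $M^{2n}$ is $n$-parallelisable, the tangent bundle is trivial over the $n$-skeleton, so $\chi = 0$ immediately. If $M^{2n}$ is almost parallelisable, the tangent bundle is trivial over $M \setminus \{\mathrm{pt}\}$, which contains the $n$-skeleton, so again $\chi = 0$. If $M^{2n}$ is a $\pi$-manifold (stably parallelisable), then the stable tangent bundle is trivial over all of $M$, so a fortiori it is stably trivial over the $n$-skeleton and $\chi = 0$; this is the observation already recorded in the Remark preceding the statement, with citation \cite{Wal62}.

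With $\chi = 0$ in all three cases, the second alternative in the hypothesis of Theorem \ref{tangent} is satisfied, and Theorem \ref{tangent} yields that the set of diffeomorphism classes of smooth structures on $M^{2n}$ is in one-to-one correspondence with $\overline{\Theta}_{2n}$, as desired.

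I do not expect any real obstacle: the entire mathematical content sits in Theorem \ref{tangent} (and behind it Theorem \ref{atmosttwo}, Wall's Theorem \ref{wall}, and the structure-set computation of Theorem \ref{conwall}(ii)), and this Corollary is simply its specialization to the natural and commonly-encountered family of manifolds for which $\chi$ is forced to be zero. The only mild point worth spelling out carefully is why each of the three parallelisability conditions independently kills $\chi$, which is transparent once one uses that $\chi$ is a \emph{stable} invariant detecting triviality over the bottom cells.
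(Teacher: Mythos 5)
Your proof is correct and follows essentially the same route as the paper: the paper's Remark immediately preceding the corollary records (citing Wall \cite{Wal62}) that each of the three conditions forces the stable tangential invariant $\chi$ to vanish, and then the corollary is read off from Theorem \ref{tangent}. You simply spell out in a bit more detail why each of $n$-parallelisability, almost parallelisability, and being a $\pi$-manifold kills the obstruction $\chi$ over the $n$-skeleton, which is a welcome but minor elaboration of the same argument.
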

\begin{definition}\cite{Kre99}
The normal $k$-type of a closed smooth manifold $M$ is the fibre homotopy type of a fibration $p:B\to BO$ such that the fibre of the map $p$ is connected
and its homotopy groups vanish in dimension $\geq k+1$, admitting a lift of the normal Gauss map $\nu_{M}:M\to BO$ to a map  $\bar{\nu}_{M}:M\to B$ such that $\bar{\nu}_{M}:M\to B$ is a $(k + 1)$-equivalence, i.e., the induced homomorphism  $\bar{\nu}_{M}:\pi_i(M)\to \pi_i(B)$ is an isomorphism
for $i\leq k$ and surjective for $i=k + 1$. We call such a lift a normal $k$-smoothing.
\end{definition}
\begin{theorem}\label{zerobordi}
Let $n=$5, 7 and let $M_0$ and $M_1$ be closed smooth $(n-1)$-connected $2n$-manifolds with the same Euler characteristic. Then
 \begin{itemize}
  \item [(i)] There is a homotopy sphere $\Sigma^{2n}\in \overline{\Theta}_{2n}$ such that $M_0$ and $M_1\#\Sigma^{2n}$ are diffeomorphic.
  \item[(ii)]Let $M^{2n}$ be a $(n-1)$-connected $2n$-manifold such that $[M]=0\in\Omega^{String}_{2n}$ and let $\Sigma$ be any exotic $2n$-sphere in $\overline{\Theta}_{2n}$. Then $M$ and $M\#\Sigma$ are not diffeomorphic.
 \end{itemize}
\end{theorem}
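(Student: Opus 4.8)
The plan is to prove the two assertions separately; (ii) is nearly immediate from Wall's Theorem~\ref{wall}, while (i) follows from Wall's classification of handlebodies after excising a top cell.

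\emph{Part (ii).} For any $(n-1)$-connected $2n$-manifold the stable tangential invariant $\chi=S\alpha$ lies in $H^{n}(M;\widetilde{KO}(\mathbb{S}^{n}))=H^{n}(M;\pi_{n-1}(SO))$, and since $\pi_{4}(SO)=\pi_{6}(SO)=0$ this group is zero for $n=5,7$; thus $\chi=0$, in particular $\chi\equiv 0\ (\mathrm{mod}\ 2)$. Wall's Theorem~\ref{wall} then shows the only homotopy sphere $\Sigma$ with $M\#\Sigma$ orientation-preservingly diffeomorphic to $M$ is the standard one, so $M$ and $M\#\Sigma$ are not diffeomorphic when $\Sigma$ is exotic. (One may instead use the hypothesis $[M]=0$: a diffeomorphism $M\#\Sigma\cong M$ would force $[\Sigma]=0$ in $\Omega^{String}_{2n}$, the class being independent of the chosen String structure because $\Omega^{String}_{7}=0$; then $\Sigma$ bounds a parallelisable $(2n+1)$-manifold and hence is standard, since $bP_{2n+1}=0$ as $2n+1$ is odd.)

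\emph{Part (i).} Since $n$ is odd the intersection form of each $M_{i}$ is skew and unimodular, hence hyperbolic, so the hypothesis $\chi(M_{0})=\chi(M_{1})$ forces both to have middle Betti number $2r$ for a single $r$. Excise an open disc: $W_{i}:=M_{i}\setminus \mathrm{int}\,\mathbb{D}^{2n}$ is an $(n-1)$-connected $2n$-dimensional handlebody (homotopy equivalent to $\bigvee_{2r}\mathbb{S}^{n}$) with $\partial W_{i}=\mathbb{S}^{2n-1}$ and hyperbolic intersection form of rank $2r$; it is parallelisable because $TW_{i}$ over $\bigvee\mathbb{S}^{n}$ is classified by $\pi_{n-1}(SO)=0$. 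By Wall's classification of handlebodies, $W_{i}$ is determined up to diffeomorphism by the intersection form together with the invariant $\alpha$ recording the normal bundles of the core $n$-spheres, which takes values in $\pi_{n-1}(SO_{n})$. For $n=7$ one has $\pi_{6}(SO_{7})=0$ (from the fibration $SO(6)\to SO(7)\to\mathbb{S}^{6}$ together with $\pi_{6}(SO(6))=\pi_{6}(SU(4))=0$), so $\alpha$ is trivial and $W_{i}$ is determined by the intersection form alone: $W_{0}\cong W_{1}\cong W_{r}:=\natural_{r}\bigl((\mathbb{S}^{7}\times\mathbb{S}^{7})\setminus\mathrm{int}\,\mathbb{D}^{14}\bigr)$. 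For $n=5$, $\alpha$ is valued in $\pi_{4}(SO_{5})\cong\mathbb{Z}_{2}$ and assembles with the intersection form into a quadratic refinement of its mod-$2$ reduction; the Arf invariant of this refinement is exactly the obstruction in $bP_{10}\cong\mathbb{Z}_{2}$ to $\partial W_{i}$ being the standard sphere, and since $\partial W_{i}=\mathbb{S}^{9}$ it vanishes, so $\alpha$ is standard and again $W_{0}\cong W_{1}\cong W_{r}:=\natural_{r}\bigl((\mathbb{S}^{5}\times\mathbb{S}^{5})\setminus\mathrm{int}\,\mathbb{D}^{10}\bigr)$. In either case $M_{i}\cong W_{r}\cup_{\psi_{i}}\mathbb{D}^{2n}$ for some diffeomorphism $\psi_{i}$ of $\mathbb{S}^{2n-1}=\partial \mathbb{D}^{2n}$, and since $\pi_{0}\mathrm{Diff}^{+}(\mathbb{S}^{2n-1})\cong\Theta_{2n}=\overline{\Theta}_{2n}$, changing $\psi_{i}$ within its isotopy class alters $W_{r}\cup_{\psi_{i}}\mathbb{D}^{2n}$ only by connected sum with the associated homotopy sphere. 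Hence $M_{0}$ is diffeomorphic to $M_{1}\#\Sigma$ for some $\Sigma\in\overline{\Theta}_{2n}$.

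\emph{A uniform alternative and the main obstacle.} One can instead run Kreck's modified surgery: the normal $(n-1)$-type of each $M_{i}$ is $B\mathrm{String}\to BO$ (the obstructions to lifting $\nu_{M_{i}}$ to $BO\langle 8\rangle$ lie in $H^{i}(M_{i};\pi_{i-1}(SO))$ for $i\le 7$ and vanish since $\pi_{j}(SO)=0$ for $j\in\{2,4,5,6\}$ while $H^{i}(M_{i})=0$ for $1\le i\le n-1$, and the lift is an $n$-equivalence because $\pi_{n}(M_{i})=H_{n}(M_{i})$ is free and $\pi_{n}(B\mathrm{String})=0$); then one compares $M_{0}$ with a suitable $M_{1}\#\Sigma$ via a $B$-bordism, the residual obstruction in Kreck's monoid $l_{2n+1}(\mathbb{Z})$ being realised — because the Euler characteristics agree — by a further connected sum with a homotopy sphere. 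The real work in both approaches is the case $n=5$: one must verify that the unstable $\alpha$-data, whose stabilisation $S\alpha$ vanishes, carries no information beyond the (necessarily trivial) Arf invariant and the homotopy-sphere indeterminacy of the top-cell attachment — equivalently, that Wall's handlebody invariant, or Kreck's $l_{2n+1}$-obstruction, is as controlled as claimed. For $n=7$ the vanishing of $\pi_{6}(SO_{7})$ removes this difficulty at the outset.
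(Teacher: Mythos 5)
Your proof is correct but takes a genuinely different route from the paper's. For part (ii) you invoke Wall's Theorem~\ref{wall} directly, observing that $\chi\in H^{n}(M;\pi_{n-1}(SO))=0$ for $n=5,7$, hence $\chi\equiv 0\pmod 2$ and $I(M)=0$ unconditionally; this is in fact stronger than the paper's statement, which never needs the hypothesis $[M]=0\in\Omega^{String}_{2n}$ (your argument is essentially the paper's own remark preceding Theorem~\ref{tangent} specialized to $n=5,7$). The paper instead argues via $\Omega^{String}_{2n}\cong\overline{\Theta}_{2n}$ and the uniqueness of the String structure. For part (i) you excise the top cell, invoke Wall's classification of $(n-1)$-connected handlebodies via the intersection form and the unstable tangential invariant $\alpha$ (trivial for $n=7$ since $\pi_{6}(SO(7))=0$, and for $n=5$ constrained to Arf invariant zero by $\partial W_i=\mathbb{S}^{9}$ together with $bP_{10}\cong\mathbb{Z}_2$), and then account for the top-cell gluing via $\pi_0\mathrm{Diff}^{+}(\mathbb{S}^{2n-1})\cong\overline{\Theta}_{2n}$. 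The paper instead identifies the normal $(n-1)$-type as $BString\to BO$, uses $\Omega^{String}_{2n}\cong\overline{\Theta}_{2n}$ to find $\Sigma$ with $M_0$ and $M_1\#\Sigma$ $BString$-bordant, and applies Kreck's Corollary~4 (equal Euler characteristics give a diffeomorphism) — a cleaner one-step argument with no case split between $n=5$ and $n=7$ and no appeal to the structure theory of Wall forms. Your handlebody argument is more elementary and self-contained (it avoids Kreck's modified surgery machinery altogether), while the paper's is shorter and more systematic; you in fact sketch the paper's approach as your stated alternative, though note that Kreck's Corollary~4 delivers the diffeomorphism directly once the Euler characteristics agree, without the further connected sum you describe for killing the $l_{2n+1}$-obstruction.
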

\begin{proof}
(i): $M_0$ and $M_1$ are  $(n-1)$-connected, and $n$ is $5$ or $7$; therefore, $\frac{p_1}{2}$ and the Stiefel-Whitney classes $\omega_2$ vanish. So, $M_0$ and $M_1$ are $BString$-manifolds. Let $\bar{\nu}_{M_j}:M_j\to BString$ be a lift of the normal gauss map $\nu_{M_j}:M_j\to BO$ in the fibration $p:BString=BO\left <8 \right >\to BO$, where $j=$0 and 1. Since $BString$ is 7-connected, $p_{\#}:\pi_{i}(BString)\to \pi_{i}(BO)$ is an isomorphism for all $i\geq 8$. This shows that $\bar{\nu}_{M_j}:M_j\to BString$ is an $n$-equivalence and hence the normal $(n-1)$-type of $M_0$ and $M_1$ is $p:BString\to BO$. We know that $\Omega^{String}_{2n}\cong \overline{\Theta}_{2n}$, where the group structure is given by connected sum \cite{Gia71}. This implies that there always exists  $\Sigma^{2n}\in \overline{\Theta}_{2n}$ such that $M_0$ and $M_1\#\Sigma^{2n}$ are $BString$-bordant. Since $M_0$ and $M_1\#\Sigma^{2n}$ have the same Euler characteristic, by \cite[Corollary 4]{Kre99}, $M_0$ and $M_1\#\Sigma^{2n}$ are diffeomorphic.\\
(ii): Since the image of the standard sphere under the isomorphism $\overline{\Theta}_{2n}\cong \Omega^{String}_{2n}$ represents the trivial element
in $\Omega^{String}_{2n}$, we have $[M^{2n}]\neq [M\#\Sigma]$ in $\Omega^{String}_{2n}$. This implies that $M$ and $M\#\Sigma$ are not $BString$-bordant. By obstruction theory, $M^{2n}$ has a unique string structure. This implies that $M$ and $M\#\Sigma$ are not diffeomorphic.
\end{proof}
\begin{theorem}\label{pi-mani}
 Let $M$ be a closed smooth 6-connected 14-dimensional $\pi$-manifold and $\Sigma$ is the exotic 14-sphere. Then $M\#\Sigma$ is not diffeomorphic to $M$. Thus, $I(M)=0$. Moreover, if $N$ is a closed smooth manifold homeomorphic to $M$, then $N$ is diffeomorphic to either $M$ or $M\#\Sigma$.
\end{theorem}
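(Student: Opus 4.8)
The plan is to derive both parts of the statement from results already available in the paper: the non-diffeomorphism $M\#\Sigma\not\cong M$ (and hence $I(M)=0$) from Wall's Theorem~\ref{wall}, and the classification of smoothings homeomorphic to $M$ from Theorem~\ref{zerobordi}(i). Here $2n=14$, so $n=7\geq 3$, and by \cite{KM63} we have $\overline{\Theta}_{14}\cong\mathbb{Z}_2$, so $\Sigma$ is the unique exotic $14$-sphere and $\overline{\Theta}_{14}=\{[\mathbb{S}^{14}],[\Sigma]\}$.

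First I would check the tangential hypothesis of Wall's theorem. Since $M$ is a $\pi$-manifold its stable tangent bundle is trivial, so the stable tangential invariant $\chi=S\circ\alpha$ vanishes; in particular $\chi\equiv 0\pmod 2$. (For $n=7$ this is in any case automatic, since the coefficient group $\widetilde{KO}(\mathbb{S}^7)\cong\pi_6(SO)$ is zero, so that every $6$-connected $14$-manifold is a $\pi$-manifold.) Then Theorem~\ref{wall} says that $M\#\Sigma$ is orientation-preserving diffeomorphic to $M$ if and only if $\Sigma=0$ in $\overline{\Theta}_{14}$ or $\chi\not\equiv 0\pmod 2$; as $\Sigma$ is exotic and $\chi\equiv 0\pmod 2$, neither alternative holds, so $M\#\Sigma\not\cong M$. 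If one does not wish to build orientation-preservation into ``diffeomorphism'', observe that $\Sigma^{-}\cong\Sigma$ because $\overline{\Theta}_{14}\cong\mathbb{Z}_2$, so an orientation-reversing diffeomorphism $M\to M\#\Sigma$ would produce an orientation-preserving diffeomorphism $M^{-}\to M^{-}\#\Sigma$ with $M^{-}$ again a closed $6$-connected $14$-dimensional $\pi$-manifold, contradicting the preceding sentence. Since $0\in I(M)$ always and $\Sigma\notin I(M)$, this gives $I(M)=0$.

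For the last claim, let $N$ be a closed smooth manifold homeomorphic to $M$. Then $N$ is again $6$-connected and has the same Euler characteristic as $M$, so Theorem~\ref{zerobordi}(i), applied with $n=7$, $M_0=N$ and $M_1=M$, yields a homotopy sphere $\Sigma^{14}\in\overline{\Theta}_{14}$ with $N$ diffeomorphic to $M\#\Sigma^{14}$. Because $\overline{\Theta}_{14}=\{[\mathbb{S}^{14}],[\Sigma]\}$, this forces $N\cong M$ (when $\Sigma^{14}$ is standard) or $N\cong M\#\Sigma$ (when $\Sigma^{14}=\Sigma$), which is the assertion.

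I do not expect a genuine obstacle: the argument is essentially a bookkeeping exercise in applying Theorems~\ref{wall} and \ref{zerobordi}. The points that need care are verifying that the $\pi$-manifold hypothesis forces $\chi\equiv 0\pmod 2$ (so that Wall's theorem rules out the diffeomorphism), and noting that a manifold homeomorphic to $M$ automatically inherits $6$-connectedness and the Euler characteristic, which is precisely the input Theorem~\ref{zerobordi}(i) demands. The only arithmetic fact used is $\overline{\Theta}_{14}\cong\mathbb{Z}_2$.
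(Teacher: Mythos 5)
Your proof is correct, and for the first part it takes a genuinely different route from the paper. To show $M\#\Sigma\not\cong M$, the paper invokes its Theorem~\ref{zerobordi}(ii): it first uses the Anderson--Brown--Peterson computation that the image of $\Omega_{14}^{framed}\to\Omega_{14}^{Spin}$ is zero together with $\Omega_{14}^{String}\cong\Omega_{14}^{Spin}\cong\mathbb{Z}_2$ to conclude $[M]=0$ in $\Omega_{14}^{String}$, and then applies the string-bordism non-diffeomorphism criterion. You instead go straight to Wall's realization theorem (Theorem~\ref{wall}), noting that the $\pi$-manifold hypothesis (or even the bare fact that $\pi_6(SO)=0$) forces $\chi\equiv 0\pmod 2$, so neither alternative in Wall's criterion can hold for the exotic $\Sigma$. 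This is more elementary and self-contained, avoiding the spin/string bordism computations entirely; in fact the paper's own remark before Theorem~\ref{tangent} already observes that $n\equiv 7\pmod 8$ forces $I(M)=0$ by Wall's results, so your route is the one the paper itself gestures at but does not use here. The trade-off is that the paper's bordism route covers the more general hypothesis $[M]=0\in\Omega_{14}^{String}$, which is weaker than $\pi$-manifold, and reuses the machinery already set up for Theorem~\ref{zerobordi}. For the final classification statement, both you and the paper apply Theorem~\ref{zerobordi}(i) with $M_0=N$, $M_1=M$ and $\overline{\Theta}_{14}\cong\mathbb{Z}_2$, in exactly the same way. (Your extra paragraph about orientation-reversing diffeomorphisms is harmless but superfluous: the paper fixes the convention in the introduction that all maps are orientation-preserving.)
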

\begin{proof}
 It follows from results of Anderson, Brown and Peterson on spin cobordism \cite{ABP67} that the image of the natural homomorphism $\Omega_{14}^{framed}\to \Omega_{14}^{Spin}$ is 0 and $\Omega_{14}^{String}\cong \Omega_{14}^{Spin}\cong \mathbb{Z}_2$ \cite{Gia71}. This shows that $[M]=0\in \Omega_{14}^{String}$. Now by Theorem \ref{zerobordi} (ii), $M\#\Sigma$ is not diffeomorphic to $M$. If $N$ is a closed smooth manifold homeomorphic to $M$, then $N$ and $M$ have the same Euler characteristic. Then by Theorem \ref{zerobordi}(i), $N$ is diffeomorphic to either $M$ or $M\#\Sigma$. 
\end{proof}
\begin{remark}\rm
By the above Theorem \ref{pi-mani}, the set of diffeomorphism classes of smooth structures on a closed smooth $6$-connected $14$-dimensional $\pi$-manifold $M$ is  $$\left \{ [M], [M\#\Sigma] \right \}\cong \mathbb{Z}_2,$$ where $\Sigma$ is the exotic $14$-sphere. So, the number of distinct smooth structures on $M$ is $2$.
\end{remark}

%\paragraph{Author:}Ramesh Kasilingam
\paragraph{ADDRESS:}
THEORETICAL STATISTICS AND MATHEMATICS UNIT, INDIAN STATISTICAL INSTITUTE, KOLKATA-700 108, INDIA.\\
E-mail : mathsramesh1984@gmail.com,\\ rameshkasilingam.iitb@gmail.com

\end{document}